\documentclass[11pt,a4paper]{article}
\usepackage{amssymb,amsthm,thmtools}
\usepackage[tbtags]{amsmath}
\usepackage{tcolorbox}
\usepackage{pgfplots}
\usepackage[british]{babel}
\usepackage[utf8]{inputenc}
\usepackage{amsthm}
\usepackage{hyperref} 
\usepackage{cleveref} 
\usepackage{tabularx}
\usepackage{array}
\usepackage{enumitem}
\usepackage{wrapfig}
\numberwithin{equation}{section}
\usepackage{relsize}
\usepackage{mathtools}	
\usepackage{accents}
\usepackage{multirow}
\usepackage{subcaption}
\usepackage{graphicx}
\usepackage{amscd}
\usepackage{epic, eepic}
\usepackage{url}
\usepackage{xcolor}
\usepackage{todonotes}
\usepackage{enumitem}
\usepackage{comment}
\usepackage{hhline}
\usepackage{stringenc}
\usepackage{titling}
\addto\extrasbritish{

}

\usepackage{tikz}
\usepackage{ifthen}
\usepackage{mathrsfs}
\pgfplotsset{compat=1.15}
\usetikzlibrary{arrows}
\definecolor{ffffvc}{rgb}{1.,1.,0.3607843137254902}
\definecolor{wwzzff}{rgb}{0.4,0.6,1.}

\usepackage{tikz-cd}

\newcommand{\nn}{\mathbb{N}}
\newcommand{\zz}{\mathbb{Z}}

\newcommand{\Z}{\mathbb{Z}}

\declaretheorem[style=plain,name=Theorem,numberwithin=section]{theorem}

\declaretheorem[style=plain,name=Lemma,sibling=theorem]{lemma}
\declaretheorem[style=plain,name=Problem,sibling=theorem]{problem}

\declaretheorem[style=plain,name=Corollary,sibling=theorem]{corollary}
\declaretheorem[style=plain,name=Claim,sibling=theorem]{claim}

\declaretheorem[style=definition,name=Definition,sibling=theorem]{definition}

\declaretheorem[style=definition,name=Remark,sibling=theorem]{remark}
\declaretheorem[style=remark,name=Note,sibling=theorem]{note}

\newlist{proplist}{enumerate}{1}
\setlist[proplist]{
    label={(\thedefinition.\arabic*)},
    ref={\thedefinition.\arabic*},
    before={\cpagelogic}
}

\allowdisplaybreaks
\AtBeginDocument{
    \hypersetup{
      hidelinks,
      colorlinks=false,    
      pdfencoding=auto
    }
}
\date{}
\title{The extensible no-$(k(n)+1)$-in-line problem}
\author{
    Tamás Gábriel\thanks{
    Eötvös Loránd University.
    \texttt{gabriel5.tamas@gmail.com}.}
    \and
    Máté Jánosik\thanks{
    Eötvös Loránd University.
    \texttt{janosikmate6@gmail.com}.}
    \and
    Dávid Melján\thanks{
    Eötvös Loránd University.
    \texttt{david.meljan@gmail.com}.}
    \and
    Benedek Nádor\thanks{
    Eötvös Loránd University.
    \texttt{nador.benedek@gmail.com}.}
}

\begin{document}

\maketitle
\begin{abstract}
\noindent
The classical no-$k$-in-line problem asks for the largest number of points that can be placed on an $n \times n$ grid without having $k$ of them collinear. A natural extension, motivated by the analogous question by Erde for $k\in \mathbb{Z}$, is the \emph{extensible no-$(k(n)+1)$-in-line problem}, which seeks a subset of points in $\mathbb{Z}^2$ with maximal possible density such that at most $k(n)$ points are collinear within the subgrid $[1,n]^2$.

\noindent We construct optimal sets for linear functions and positive-density sets for power functions. We prove that any configuration achieving $\liminf\frac{S_n}{n k(n)} \ge 0.897$ must satisfy $k(n) = \Omega( n^c)$ for some $c>0$ constant; therefore, the extensible no-$k$-in-line problem has no configuration with this property when $k$ is a constant. Finally, we reduce the problem to the extensible no-$k$-in-line problem, showing that if a positive-density point-set exists for a constant limiter function, then one also exists for any sufficiently regular function $k(n)$.
\end{abstract}

\section{Introduction}

The \textit{no-three-in-line} problem asks about the largest number of points that can be placed in a square lattice of size  $n\times n$ without three of them lying on the same line. It is one of the oldest and most studied problems concerning lattice configurations. The question was introduced by {Dudeney} in 1900 for the case $n=8$ \cite{dudeney-1900-weekly-dispatch} \cite{dudeney1958amusements}. A trivial upper bound for the number of placeable points is $2n$, by the pigeonhole principle; placing $2n+1$ points on an $n\times n$ lattice would result in three points lying on the same vertical line. In small cases up to $n=56$ and for $n=58$, a construction containing $2n$ points is known from the work of Flammenkamp and Prellberg\cite{oeis-A272651}. The best known lower bound of $1.5n(1-o(1))$ was established by Hall, Jackson, Sudbery, and Wild \cite{hall1975some}. However, there is a lack of consensus on the asymptotic number of placeable points, $1.5n,\left(\frac{2\pi^2}{3}\right)^{\frac13}n\approx1.87n$ and $2n$ are all among the suggested values \cite{green100,Brass2005,Guy/Kelly:1968}.

A natural generalization of the previous problem is the \textit{no-($k+1$)-in-line} problem, which permits at most $k$ points per line. The trivial upper bound in this case is $kn$ by similar arguments. This problem was solved very recently by Kovács, Nagy and Szabó \cite{kovacs2025settling} who proved  the existence of a construction of size $kn$ using probabilistic and combinatorial techniques.  Based on this construction, Grebennikov and Kwan \cite{grebennikov2025nok1inlineproblemlargeconstant}  subsequently improved the bound on $k$ and show that  similar statement holds for $k>10^{37}$.

Let $[1,n]^2$ denote the set of lattice points in the square with vertices $(1,1), (1, n), (n, 1), (n, n)$. \\
Erde introduced the \textit{extensible no-three-in-line} problem at the Third Southwestern German Workshop
on Graph Theory. The problem asks whether $$\displaystyle\liminf_{n\to \infty}\frac{\left|S\cap[1,n]^{2}\right|}{n}=0$$
holds for every set $S\subset \mathbb{N}^2$ of points in general position. Nagy, Nagy and Woodroofe \cite{nagy2023extensible} proved that there exists a set $S$ of points in general position satisfying $$\displaystyle\liminf_{n\to \infty}\frac{\left|S\cap[1,n]^{2}\right|}{n/\log^{1+\varepsilon}n}>0.$$
This was improved recently by Ghosal by a $\sqrt{\log n}$ factor to $\Omega(n/\sqrt{\log n})$~\cite{ghosal2026note}.
Nagy, Nagy and Woodroofe also provided computational evidence for values up to $n=10000$, suggesting the existence of a point set $S$, satisfying $$\displaystyle\liminf_{n\to \infty}\frac{\left|S\cap[1,n]^{2}\right|}{n}>0.8.$$ 

\noindent In this article, we examine the \textit{extensible no-$(k(n)+1)$-in-line problem}. This problem resembles the extensible-no-$k$-in-line, however, instead of a constant $k$, we limit the number of points on each line in $[1,n]^2$ by $k(n)$, a monotone increasing function of $n$. To evaluate point sets, we define a density function measuring the size of S, relative to the trivial upper bound.

\begin{definition}[Density]
\label{density}
 Let  $S\subseteq\nn^+\times \nn^+$ be a set of lattice points
 The density of the point set is defined as:
\[
\liminf_{n\to \infty}\frac{\left|S\cap[1,n]^{2}\right|}{k(n)n}.
\]
\end{definition}

\begin{problem}[Main problem]\label{que:Erde}
Find the largest achievable density defined in Definition \ref{density} given that no line admits more than $k(n)$ points in $[1,n]^2$.

\begin{remark}
     We assume that $k(n)$ is monotone without loss of generality, as any non-monotonic $k(n)$ can be replaced by $k'(n)=\inf_{m>n}k(m)$ without changing the set of valid configurations.
    \noindent
\end{remark}

\end{problem}

In Section~\ref{positive_density_sets}, we investigate the case of linear $k(n)$ and prove the following theorem:

\begin{theorem} \label{bigtheorem_linear}
    Let $k(n) = c\cdot n$ for some $0<c<1$. Then there exists a set of points $S\subseteq\nn^+\times \nn^+$ such that
    $$\displaystyle\liminf_{n\to\infty} \frac{|S\cap [1,n]^2|}{k(n)\cdot n} = 1$$ 
    and each line contains at most $k(n)$ points in $S\cap [1, n]^2$.
\end{theorem}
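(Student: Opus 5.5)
The plan is to take a thin irrational "strip" in the torus. Let $S$ be the set of lattice points $(x,y)\in\nn^+\times\nn^+$ with
\[
\{\alpha x+\beta y\}<c-\varepsilon(\max(x,y)),
\]
where $\{\cdot\}$ denotes the fractional part. We choose $\alpha=\sqrt2$ and $\beta=N\sqrt2$ for a fixed integer $N>2/c$. The function $\varepsilon(r)\ge 0$ is non-increasing with $\varepsilon(r)\to 0$; concretely, $\varepsilon(r)=\min\bigl(c,\,C_0\log^2(r+2)/r\bigr)$ for a large constant $C_0$. Taking $\varepsilon(r)=c$ for small $r$ makes $S$ empty in a bounded region, which disposes of finitely many small $n$.

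\textbf{Density.} For each fixed threshold $t$, the pairs $(x,y)\in[1,n]^2$ have $\alpha x+\beta y$ equidistributed mod $1$, so $\#\{(x,y)\in[1,n]^2:\{\alpha x+\beta y\}<t\}=tn^2+o(n^2)$, uniformly in $t$. We split $[1,n]^2$ into $L$-shaped shells $\max(x,y)=r$. Since $\varepsilon(r)\to0$, only $o(n^2)$ points lie in shells where $\varepsilon(r)$ is not small. Hence $|S\cap[1,n]^2|=cn^2(1-o(1))$, and the ratio to $k(n)n=cn^2$ tends to $1$. It cannot exceed $1$ by the trivial column bound.

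\textbf{Line condition.} Fix $n$ and a line $\ell$ containing $m$ lattice points of $[1,n]^2$, with primitive direction $(p,q)$. If $m\le cn$ there is nothing to prove. Otherwise $m>cn$, and since consecutive lattice points on $\ell$ differ by $(p,q)$, we get $|p|,|q|\le n/(m-1)\le 2/c$ for large $n$. So only finitely many directions matter. For each of them, $\theta_{p,q}=\alpha p+\beta q=(p+Nq)\sqrt2$ is a nonzero integer multiple of $\sqrt2$, because $|p|<N$ forces $p+Nq\neq0$ when $q\ne0$, and $p\neq0$ when $q=0$. Hence $\theta_{p,q}$ is badly approximable, and the values $\{\alpha x+\beta y\}$ along $\ell$ form an arithmetic progression $\{a+j\theta_{p,q}\}$, $j=0,\dots,m-1$.

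Every point of $\ell\cap[1,n]^2$ has threshold at most $c-\varepsilon(n)$, since $\varepsilon$ is non-increasing and $\max(x,y)\le n$. The Erdős–Turán / Denjoy–Koksma discrepancy bound for badly approximable rotations gives $D_m\le C\log m$, with $C$ uniform over the finitely many relevant directions. Therefore
\[
|S\cap\ell\cap[1,n]^2|\le m(c-\varepsilon(n))+C\log m\le cn-\varepsilon(n)\,cn+C\log n\le cn,
\]
using $cn<m\le n$ and $\varepsilon(n)cn=cC_0\log^2 n\ge C\log n$ for large $n$. This gives at most $k(n)$ points. The bound holds for every line and every $n$ simultaneously, because $S$ is fixed.

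\textbf{Main obstacle.} The hard part is obtaining discrepancy control that is uniform in the starting offset $a$ and over all relevant directions, together with a per-$n$ safety margin that still vanishes asymptotically. The reduction to finitely many directions, valid because only lines with more than $cn$ points matter, and the choice $\beta=N\alpha$ are what make this work. Otherwise some direction $(p,q)$ could have $\theta_{p,q}$ rational or very well approximable, and then a single long line would see no equidistribution at all.
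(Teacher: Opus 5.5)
Your argument is correct, and it takes a genuinely different route from the paper's.

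\textbf{The paper's route.} The paper builds the set in three stages.
\begin{itemize}
\item For $c=1/m$ it uses an explicit periodic pattern: in each row, runs of $m$ consecutive points separated by $m^2-m$ empty cells, shifted by $m$ from row to row. The line bound is checked by a slope case analysis over vertical stripes of width $m$.
\item For $c=p/q$ it takes a union of $p$ shifted copies.
\item For irrational $c$ it recursively glues the rational constructions for the decimal truncations $c_k$, separated by empty gaps $G_k$. The line bound (Lemma \ref{lemma3}) comes from covering a line by non-overlapping squares, each equivalent to a piece of some rational construction.
\end{itemize}

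\textbf{Your route.} You replace all of this with a single irrational strip $\{\alpha x+\beta y\}<c-\varepsilon(\max(x,y))$. Two observations carry the line condition. First, only lines with more than $cn$ lattice points matter, and these have primitive direction with $|p|,|q|\le 2/c$. Second, taking $\beta=N\alpha$ with $N>2/c$ makes every such $\theta_{p,q}$ a nonzero integer multiple of $\sqrt2$. Together these reduce the line condition to an $O(\log m)$ discrepancy bound for finitely many badly approximable rotations. That bound is uniform in the offset (e.g.\ via Erd\H{o}s--Tur\'an, whose exponential sums do not depend on $a$). The margin $cn\varepsilon(n)\asymp\log^2 n$ then absorbs the discrepancy error.

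\textbf{What each buys.} Your approach gives:
\begin{itemize}
\item a uniform treatment of all $c\in(0,1)$, with no rational/irrational split and none of the gap-and-covering-square bookkeeping of Lemmas \ref{lemma2}--\ref{lemma3}, which is the most delicate part of the paper's proof;
\item convergence of the ratio (not just the $\liminf$), with rate $|S\cap[1,n]^2|=cn^2-O(n\log^2 n)$;
\item an easy adaptation to Lemma \ref{lowerboundlemma}: use the threshold $\inf_{s\ge r}k(s)/s-\varepsilon(r)$, and keep $S$ empty until $k(n)/n$ stays above half its $\liminf$.
\end{itemize}
The paper's approach, in exchange, is fully elementary and exact, gives explicit periodic configurations for rational $c$, and needs no input from Diophantine approximation.

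\textbf{Write-up point.} State explicitly that $C$ depends only on $c$ and $N$. Then $C_0$ can be fixed afterwards, large enough that $cn\ge 2$ and $cC_0\log^2(n+2)\ge C\log(n+2)$ hold for every $n$ with $\varepsilon(n)<c$; all other $n$ have $S\cap[1,n]^2=\emptyset$. Also state the discrepancy bound as $C\log(m+2)$ so small $m$ are covered. This turns your ``for large $n$'' into ``for every $n$'', which the statement requires.
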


We further verify that if $k(n) = cn^{\alpha}$ with $\alpha \le 1$ and constant $c\in\mathbb{R}$, then a positive lower bound is achievable, meaning the limit inferior is strictly positive.

\begin{theorem} \label{posliminf_linear}
    Suppose that $k(n) = cn^\alpha$ for some $0<\alpha\le1$, $c\in \mathbb{R^+}$. Then, there exists a set $S$ satisfying $$\liminf \frac{|S\cap [1,n]^2|}{k(n)\cdot n} > 0$$
    and no $k(n)+1$ points are collinear within $[1,n]^2$ for all $n\in \nn$.
\end{theorem}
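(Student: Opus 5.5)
We give an explicit multiscale construction: quadratic residue patterns modulo a prime, with the prime growing from one dyadic scale to the next.

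\textbf{Basic gadget.} Fix a prime $p$ and consider $P_p=\{(x,y): y\equiv x^2 \pmod p\}$. In a box of side $m$ this set has about $m^2/p$ points. Now take a non-vertical line $y=\frac{a}{q}x+b$ and suppose it contains lattice points of $P_p$. If $p\nmid q$, the $x$-coordinates of its points of $P_p$ satisfy a quadratic congruence mod $p$, so they lie in at most $2$ residue classes mod $p$. Hence such a line contains at most $2(m/p+1)$ points of $P_p$ in the box. If $p\mid q$, the lattice points on the line are spaced at least $p$ apart in $x$, so the same bound holds trivially. A vertical or horizontal line contains at most $m/p+2$ points. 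So a single gadget has density about $1/p$ while every line carries $O(m/p+1)$ points.

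\textbf{Multiscale construction.} Let $N_j=2^j$ and define the shells $A_j=[1,N_j]^2\setminus[1,N_{j-1}]^2$. By Bertrand's postulate, choose primes $p_j\in[C2^{j(1-\alpha)},\,2C2^{j(1-\alpha)}]$, where $C$ is a large constant fixed later. When $\alpha=1$ the $p_j$ are bounded, and we simply take a fixed prime $p\ge C$. Put
\[
S=\bigcup_{j\ge j_0}\bigl(A_j\cap P_{p_j}\bigr),
\]
leaving the shells below a threshold $j_0$ empty.

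\textbf{Line bound.} Take $n$ with $N_{j-1}<n\le N_j$. A line meets each shell $A_i$ with $i\le j$ in a segment of length at most $2N_i$. By the gadget bound, it therefore contains at most $2(2N_i/p_i+1)\le 4N_i^{\alpha}/C+2$ points of $A_i\cap P_{p_i}$. Summing over $i\le j$ gives at most
\[
\frac{4}{C(1-2^{-\alpha})}N_j^{\alpha}+2j\le \frac{4\cdot 2^{\alpha}}{C(1-2^{-\alpha})}\,n^{\alpha}+2\log_2 n+2 .
\]
Choose $C$ so that the first term is at most $\frac c2 n^\alpha$. Then choose $j_0$ so that, for $n\ge N_{j_0-1}$, the logarithmic term is at most $\frac c2 n^\alpha$. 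The total is then at most $c n^\alpha=k(n)$, and rounding down is harmless because the count is an integer. For $n< N_{j_0-1}$, the set $S\cap[1,n]^2$ is empty, so the condition holds for every $n$.

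\textbf{Density.} For $n\in(N_{j-1},N_j]$ with $j>j_0+1$, the square $[1,n]^2$ contains the whole shell $A_{j-1}$. This shell has area $\tfrac34 N_{j-1}^2$, and its rows of length about $N_{j-1}\ge p_{j-1}$ each contain roughly a $1/p_{j-1}$ fraction of points. So it contributes at least about $\tfrac34 N_{j-1}^2/p_{j-1}\gtrsim N_{j-1}^{1+\alpha}/(2C)\gtrsim n^{1+\alpha}/(2^{2+\alpha}C)$ points. Dividing by $k(n)n=cn^{1+\alpha}$ gives a positive liminf.

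\textbf{Main obstacle.} The delicate step is the line count across shells. The $x$-range of a line inside a shell is not a full period interval, and the residue classes can shift from shell to shell. Both effects are absorbed by the additive $+1$ per shell, which only produces the $O(\log n)$ term. The $O(\log n)$ term beats $n^{\alpha}$ precisely because $\alpha>0$. The other point to check is that ensuring the bound for all $n$, including small ones, only requires emptying finitely many shells, which does not affect the liminf.
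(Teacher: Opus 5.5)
Your proof is correct, and it takes a genuinely different route from the paper's.

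\textbf{The paper's route.} The paper builds ``towers'': over the $x$-intervals $\bigl[(Am/c)^{1/\alpha},((Am+1)/c)^{1/\alpha}\bigr]$ it stacks squares of side $w_m$ up to height roughly $x/4$. It fills each square with a modular-hyperbola set in general position, and uses the fact that only about $cn^\alpha/A$ towers meet $[1,n]^2$. The line bound then needs a case analysis on the slope $q$:
\begin{itemize}
\item vertical lines;
\item shallow lines, which meet a bounded number of squares per tower;
\item steep lines, where the number of towers crossed is controlled by how long the line stays below $y=x/4$, together with $x^\alpha-1\le\alpha(x-1)$;
\item negative slopes, handled by reflection.
\end{itemize}

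\textbf{Your route.} You spread a thin algebraic pattern over each entire dyadic shell. You tune the modulus $p_j\asymp N_j^{1-\alpha}$ so that the density $1/p_j$ and the per-line count $O(N_j/p_j+1)$ scale together. The quadratic-congruence argument bounds every non-vertical line uniformly in its slope, so the case analysis disappears. The geometric sum over shells costs only an additive $O(\log n)$, which $n^\alpha$ absorbs because $\alpha>0$.

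\textbf{What each buys.} The paper's version yields an explicit density constant and keeps the link with general-position (no-three-in-line) blocks. Yours is shorter and more robust, because the slope-dependent square counts are exactly where the tower argument is fragile. The paper claims at most two squares per tower for $|q|<2$, but this only holds for $|q|\le1$; for $1<|q|<2$ a line can cross three stacked squares of one tower, so $A$ would need adjusting.

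\textbf{Two harmless slips.} First, a horizontal line $y=y_0$ can contain about $2m/p$ points of $P_p$ (one per square root of $y_0$), not $m/p+2$. Your general non-vertical bound with $q=1$ already covers this case. Second, individual rows of $P_p$ need not carry a $1/p$ fraction of points; roughly half of them are empty. Count the shell by columns instead. For fixed $x$, the condition $y\equiv x^2\pmod p$ is a single residue class, so a column segment of length $L$ contains at least $\lfloor L/p\rfloor$ points. This gives the $(\tfrac34-o(1))N_{j-1}^2/p_{j-1}$ count you need.
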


\medskip

In Section~\ref{criteria}, we prove that $\liminf$ values close to $1$ can be achieved only if $k(n)$ increases sufficiently fast.
\begin{theorem}
    Any configuration satisfying $$\displaystyle\liminf_{n\to\infty}\frac{\left|S\cap[1,n]^{2}\right|}{n k(n)} \ge 0.897$$ must satisfy $k(n) = \Omega( n^c)$, where $c>0$ is a constant.
\end{theorem}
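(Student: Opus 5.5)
The plan is to compare two scales, $n$ and $\lambda n$ for a fixed $\lambda>1$ (I would try $\lambda=2$ first), and show a dichotomy. Write $K=k(\lambda n)$. Either $K\ge(1+\delta)k(n)$ for some absolute $\delta>0$, or the density at one of the two scales is below $0.897$. Suppose $\liminf |S\cap[1,n]^2|/(nk(n))\ge 0.897$. Then for all large $n$ both boxes $[1,n]^2$ and $[1,\lambda n]^2$ have density at least $0.897-o(1)$. The dichotomy then gives $k(\lambda n)\ge(1+\delta)k(n)$ for all $n\ge n_0$. Iterating along $n_0\lambda^j$ and using monotonicity of $k$ (see the Remark after Problem~\ref{que:Erde}) yields $k(n)=\Omega(n^{c})$ with $c=\log_\lambda(1+\delta)>0$.

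For the dichotomy I would set up a linear program over regions. Partition $[1,\lambda n]^2$ into the corner box $A=[1,n]^2$ and the remaining blocks. For $\lambda=2$ these are $B$ (bottom right), $C$ (top left) and $D$ (top right). Let $a,b,\dots$ denote the number of points in each block divided by $nK$. The constraints come from three sources.

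1. The hypothesis at scale $n$ gives $a\ge 0.897\,k(n)/K$.
2. The hypothesis at scale $\lambda n$ gives $a+b+c+d\ge 0.897\lambda$.
3. Each family of parallel lines gives capacity constraints. Every line has at most $K$ points in $[1,\lambda n]^2$ and at most $k(n)$ points in $A$. Rows give $a+b\le 1$, columns give $a+c\le1$, and so on.

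Rows and columns alone are too weak: they only give $k(\lambda n)\ge 0.75\,k(n)$, which is trivial. So I would add the two diagonal families $x-y=d$ and $x+y=s$. Their lines meet the blocks in segments of varying lengths, and the point is how those lengths interact with the saturation of $A$.

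The key sub-step is a \emph{saturation lemma}. If $A$ has density $\ge 0.897$ with respect to $k(n)$, then most rows and columns of $A$ carry nearly $k(n)$ points. If moreover $K\le(1+\varepsilon)k(n)$, then the corresponding rows and columns of $B$ and $C$ are nearly empty. The same holds for the anti-diagonals $x+y=s$ with $s\le n+1$, which lie entirely inside $A$, and for the diagonals $x-y=d$ restricted to $A$. In the saturated case the mass outside $A$ is then forced into the region far from $A$ ($D$ and its analogues). There the diagonals $x-y=d$ for $|d|<n$ pass through both $A$ and $D$, so their capacity is already largely used up by $A$. Feeding these bounds into the LP and letting $\varepsilon\to0$ should give $a+b+c+d<0.897\lambda$. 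Continuity of the LP optimum in $\varepsilon$ then produces the required $\delta$.

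The main obstacle is getting an LP that is actually infeasible at exactly $0.897$, and transferring per-line saturation into per-region emptiness. The difficulty in the transfer is that density $0.897$ only forces saturation on average, not on every line. I would handle this with a Markov-type averaging: at most a $0.103/\eta$ fraction of the lines in each family are more than $\eta k(n)$ below capacity. I would then optimize over $\lambda$ and over a finer grid of blocks, for instance splitting into strips of width $n/m$. If $\lambda=2$ with four blocks does not reach $0.897$, the constant $0.897$ presumably arises as the optimum of such a finer LP. In that case I would solve the LP numerically to identify the extremal configuration, and then certify infeasibility with an explicit dual solution, i.e.\ a weighted sum of line constraints.
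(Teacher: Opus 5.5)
\textbf{There is a genuine gap.} The step that carries the whole argument is that the LP becomes infeasible at $0.897$. You never show this, and with the constraints you list it is false.

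You impose the density hypothesis only at the two scales $n$ and $\lambda n$. Take $\lambda=2$ and $K=k(n)$. The block point $a=1$, $b=c=0$, $d=0.8$ satisfies every constraint you write down:
\begin{itemize}
\item rows and columns are fine;
\item the anti-diagonals meeting $A$ ($x+y\le 2n$) and those meeting $D$ ($x+y\ge 2n+2$) are disjoint;
\item the $2n-1$ diagonals with $|x-y|<n$ have total capacity about $2nK$, and $A$ uses only about $nK$ of it.
\end{itemize}
So your claim that their capacity is ``already largely used up by $A$'' is off by a factor of two.

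Enlarging $\lambda$ or refining the grid does not rescue a two-scale argument either. For $\lambda\ge 19$, put weight $K/n$ on every cell of $[1,n]^2$ and nothing in $B$ or $C$. Put weight $K/((\lambda-1)n)$ on every cell of $[n+1,\lambda n]^2$ with $|x-y|\ge n$. Then every row, column and $\pm1$-diagonal of $[1,\lambda n]^2$ carries at most $K$. The density at scale $n$ is $1$, and the density at scale $\lambda n$ is about $(\lambda-2+1/(\lambda-1))/\lambda\ge 0.897$. So no dual certificate built from your line families exists for such $\lambda$. For small $\lambda$ you offer none, and the Markov-type saturation lemma does not address this.

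\textbf{The missing ingredient is the hypothesis at every intermediate scale.} This is what the paper's proof uses. Fix $n_0$ and $m$. For each $i\le m$, compare $[1,(i-1)n_0]^2$ with $[1,in_0]^2$ using aggregate counts only; no per-line saturation is needed.
\begin{itemize}
\item Columns bound $[1,(i-1)n_0]\times[(i-1)n_0+1,in_0]$ from above.
\item Rows bound the top strip $[1,in_0]\times[(i-1)n_0+1,in_0]$ from below.
\item Hence the diagonal tile $[(i-1)n_0+1,in_0]^2$ contains at least $n_0((i-1)(1-\varepsilon)k((i-1)n_0)-(i-2+i\varepsilon)k(in_0))$ points.
\end{itemize}
All $m$ diagonal tiles lie on the same $2n_0-1$ lines of slope $1$, so together they hold at most $2n_0k(mn_0)$ points. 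This gives
\[ \sum_{i=1}^{m-1}2(1-i\varepsilon)k(in_0)\le m(1+\varepsilon)k(mn_0). \]
For flat $k$ and small $\varepsilon$ this reads $2(m-1)\le m$, which is false once $m\ge 3$. The contradiction needs at least three nearly full blocks on a common set of diagonals, and only the intermediate scales force that. Your $\lambda=2$ picture has just two such blocks, which is why it fails.

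The paper takes $m=5$ and $\varepsilon=0.1036$ and solves the recursion by hand, obtaining $k(5n_0)\ge 1.00054\,k(n_0)$. The constant $0.897$ comes from this choice, not from an LP optimum.
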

 
  As $k(n)\equiv k \notin \Omega( n^c)$ for any constant $c>0$, this result provides an upper bound for Erde's problem: $\displaystyle\liminf_{n\to \infty}\frac{\left|S\cap[1,n]^{2}\right|}{2n}<0.897$.  Computational evidence of Nagy, Nagy and Woodroofe suggests that $\displaystyle\liminf_{n\to \infty}\frac{\left|S\cap[1,n]^{2}\right|}{n}>0.8$, resulting in $\displaystyle\liminf_{n\to \infty}\frac{\left|S\cap[1,n]^{2}\right|}{2n}>0.4$ in our case, leaving a significant gap between the theoretical and experimental results. By generalizing the technique used in the proof, we show that if a construction has density 1, then $k(n)=\Omega(n^c)$ for any constant $c<1$.

In Section 4, we establish a direct connection between the classical extensible no-three-in-line problem and our generalized no-$(k(n)+1)$-in-line variant. We introduce the concept of \textit{sublinear limiter functions} to formalize the notion of a sufficiently regular function, and prove that a positive-density solution to Erde's original question yields a universal lower bound for these limiters. Specifically, our results show that a positive answer to Erde's question for $k=2$ would imply the existence of a positive-density set for any sublinear limiter $k(n)$. This reduces the generalized problem to the classical case, reinforcing the suggestion that the difficulty of this problem increases as the growth rate of $k(n)$ decreases.

\subsection*{Acknowledgements}

The results presented in this paper are the outcome of the Research Experience for Undergraduates (REU) program at ELTE in 2025 as a joint effort by the authors. We express our gratitude to the organizers and the manager of the program and especially to our supervisor Zoltán Lóránt Nagy.

\newpage
\section{Sets with positive density}\label{positive_density_sets}

In this section we prove Theorem \ref{bigtheorem_linear} by providing a point-set with density of 1 to the $k(n) = c\cdot n$ problem. We further show a point-set with positive density to the $k(n) = c\cdot n^\alpha$ problem. 
\medskip

\subsection{Proof of Theorem \ref{bigtheorem_linear}}

The proof of Theorem \ref{bigtheorem_linear} consists of multiple steps, gradually extending the proof to all values of $0<c<1$. We prove this theorem first for $c=\frac{1}{m}, m\in \mathbb{N}^+$, second for rational $c$ and finally for all $0<c<1$.
\begin{claim}\label{claim1perm}
   There exists a set of points $S\subseteq\nn^+\times \nn^+$ such that
    $$\displaystyle\liminf_{n\to\infty} \frac{|S\cap [1,n]^2|}{\frac{n}{m}\cdot n} = 1$$
    for all $m\in\nn$ and each line contains at most $\frac{n}{m}$ points in $S\cap [1,n]^2$.
\end{claim}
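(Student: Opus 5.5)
The plan is to build $S$ from one fixed $m\times m$ pattern placed periodically. I am reading the statement as: for each fixed $m$ there is an $S$ (depending on $m$), with density $1$ with respect to $k(n)=n/m$. The density part is then automatic, since a periodic set of density $1/m$ has $|S\cap[1,n]^2|=\frac{n^2}{m}+O(n)$. All of the work is in the line condition. That condition has two sources of trouble: rows and columns, where the bound $n/m$ must hold exactly with no $+1$ slack, and lines of all other slopes.

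\textbf{Step 1 (rows and columns).} Tile $\nn^+\times\nn^+$ by the aligned blocks $[am+1,(a+1)m]\times[bm+1,(b+1)m]$. In each block place a permutation pattern $\pi$: row $i$ of the block gets a point in column $\pi(i)$, and column $j$ gets exactly one point. A row then meets $\lfloor n/m\rfloor$ full blocks, plus one point from the partial block exactly when $\pi(i)\le n \bmod m$. To remove this $+1$ overshoot I would let the block pattern depend on the block index, e.g.\ shift $\pi$ cyclically with $a+b$. The aim is that, for each residue $n\bmod m$, the offending rows and columns force the deletion of only $O(n)$ points in total. If that is not enough, I would instead delete one point per row and column from a sparse, fixed family of blocks (say on the diagonal), chosen once and for all. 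This costs only $O(n)$ points, so it does not affect the $\liminf$.

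\textbf{Step 2 (other lines).} A lattice pattern cannot work. By Minkowski's theorem every index-$m$ lattice contains a vector $(p,q)$ with $\max(|p|,|q|)\le\sqrt m$, and the line in that direction carries about $n/\sqrt m>n/m$ points. So $\pi$ must be chosen nonlinearly. My first attempt would be to take $m$-periodic data with $S=\{(x,y): y\equiv f(x)\pmod m\}$ and $f$ a permutation polynomial or a Costas-type (Welch) map. The hope is that every non-axis line $y=\frac{q}{r}x+t$ meets $S$ in at most one residue class along its lattice points. That would give at most $\frac{n}{m\max(|q|,|r|)}+O(1)\le n/m$ points whenever $\max(|q|,|r|)\ge2$. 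The remaining $O(1)$ error has to be absorbed in the same way as in Step 1. For slopes $\pm1$ the condition is that $x\mapsto f(x)\mp x$ is also a permutation modulo $m$, i.e.\ $f$ is an orthomorphism. Such maps exist for odd $m$ (e.g.\ $f(x)=2x$ when $3\nmid m$); for other $m$ I would pass to a product construction or to blocks of size a multiple of $m$.

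\textbf{Main obstacle.} The hardest step is combining the two requirements. Lines of small slope, such as $(1,2)$ or $(1,\pm1)$, must each pick up at most one residue class, and simultaneously no line may exceed $n/m$ exactly, with no additive slack, for every $n$ and not only for $m\mid n$. I would first settle the case $m\mid n$ using the orthomorphism and permutation-polynomial conditions. I would then handle general $n$ by a sparse global deletion. The key point is that the deleted set must be fixed independently of $n$, must have $o(n^2)$ points in $[1,n]^2$, and must still hit every potentially overfull line for every $n$; verifying this is the crux of the argument.
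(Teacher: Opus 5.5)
There is a genuine gap. Step 2 carries essentially all of the content of the claim, and it is never carried out; worse, the route you sketch for it cannot work for many $m$.

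In your framework (a doubly $m$-periodic pattern of density $1/m$, corrected by $o(n^2)$ deletions), every row, column, diagonal and antidiagonal class of the torus $\mathbb{Z}_m^2$ must contain exactly one point. Suppose some diagonal class contained two or more. Then each of the roughly $n/(2m)$ diagonals of that class that meet $[1,n]^2$ in at least $3n/4$ lattice points would carry at least $3n/(2m)-O(1)$ points. You would have to delete $\Omega(n^2/m^2)$ points of $[1,n]^2$, which destroys density $1$, so no sparse correction can rescue it.

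Hence $f$, $f-\mathrm{id}$ and $f+\mathrm{id}$ must all be permutations of $\mathbb{Z}_m$, i.e.\ $f$ is a toroidal $m$-queens solution. By a classical result of P\'olya these exist only when $\gcd(m,6)=1$. For even $m$ this is immediate: $\sum_x (f(x)-x)\equiv 0$, whereas the sum of all residues is $\equiv m/2 \pmod m$.

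Even $\gcd(m,6)=1$ is not enough. For $m=5$ a short case check shows the only solutions are $y\equiv 2x+b$ and $y\equiv 3x+b$. These are lattice cosets, and the lines of slope $2$, resp.\ $-2$, through them carry about $n/2>n/5$ points; this is exactly your own Minkowski obstruction.

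So ``a permutation polynomial or Welch map'' is only a hope. Your fallback, ``blocks of size a multiple of $m$'', is precisely where the actual construction still has to be found. You also leave open whether a fixed sparse deletion can hit every overfull oblique line for every $n$.

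The paper supplies both missing pieces by using period $m^2$ together with an empty margin. It takes
$S=\{(i,j): i,j>A,\ i-mj \bmod m^2\in\{0,\dots,m-1\}\}$ with $A=2m^2$.
In every vertical stripe $[km,(k+1)m-1]\times\mathbb{N}$, this set consists of complete horizontal segments of length $m$ in the rows $j\equiv k\pmod m$. The slope cases then go as follows:
\begin{itemize}
\item For $0<|q|\le 1$, a line meets at most one point per stripe.
\item For $q>1$, consecutive points on the line rise by at least $m$.
\item For $q<-1$, a drop smaller than $m$ can only be a drop of $m-1$ between adjacent stripes. That forces slope $-(m-1)/r$, which is incompatible with two points sharing a stripe (slope $-am/b$ with $0<b<m$), because $\gcd(m,m-1)=1$.
\end{itemize}
The additive overshoot, both for oblique lines and for the rows and columns of your Step 1, is absorbed uniformly in $n$ and in the line: the margin of width $2m^2$ leaves a slack of $2m$ in every count. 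No $n$-dependent deletion is needed.
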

\begin{proof}
We provide a constructive proof for the statement, by defining an infinite set of lattice points, $S$ that satisfies \cref{claim1perm}. Consider the following set of points $S$: \\
In every row, $m$ consecutive points are selected then the subsequent $(m-1)\cdot m$ points are skipped. 
This selection process is repeated along the entire row. The $(i+1)$th row is constructed by shifting the first row by $m\cdot i$ to the right. A more rigorous definition is provided below.
\begin{definition}
Let $A=2m^2$ be a constant.
$$S:=\{(i, j)\in \nn_{>A}\times \nn_{>A} | \    i-mj \in \{0, 1, ..., m-1\} \mod m^2\}.$$
\end{definition}
\begin{note}
    No point of $S$ lies in any row or column with index in $[1,A]$.
\end{note}
\begin{note}
    As the defining formula suggests, $S$ is periodic with period $m^2$ along both the $x$- and $y$-axes and has $m$ points in each period.
\end{note} \noindent

    \begin{figure}
    \centering
    \begin{tikzpicture}[scale=0.7]
		\draw[step=1cm,gray,very thin] (0,0) grid (10,10);
		
		\foreach \x/\y in {1/1, 2/1, 3/1, 4/2, 5/2, 6/2, 7/3, 8/3, 9/3, 1/4, 2/4, 3/4, 4/5, 5/5, 6/5, 7/6, 8/6, 9/6, 1/7, 2/7, 3/7, 4/8, 5/8, 6/8, 7/9, 8/9, 9/9} {
			\fill[black] (\x,\y) circle[radius=3pt];
		}
		
		\fill[black] (0, 0) circle[radius=2pt];
		\node at (-0.5, -0.5) {$(2m^2, 2m^2)$};
          
        \node at (5, 11) {$\vdots$};
        \node at (11, 5.5) {$\cdots$};
	\end{tikzpicture}
    \caption{for $c = \frac13$}
    \end{figure}
    
\noindent
In the following, we will prove that $$\displaystyle\liminf_{n\to\infty} \frac{|S\cap [1,n]^2|}{\frac{n}{m}\cdot n} = 1$$
and $l\cap S$ has a maximum of $\frac{n}{m}$ points in $[1,n]^2$ for every line $l$.

   For a fixed $n$, each row (excluding the first $A$ rows) contains at least $\frac n m-m-A$ points in $[1,n]^2$. Consequently, within square $[1, n]^2$, the set $S$ contains at least $n\left(\frac{n}{m} - A-m\right)-An$ points. Therefore
   $$\liminf_{n\to \infty} \frac{|S\cap [1,n]^2|}{\frac{n}{m}\cdot n} \ge \liminf_{n\to \infty} \frac{\frac{n^2}{m} - 2An-nm}{\frac{n^2}{m}}=\liminf_{n\to \infty}\left(1-\frac{(2A+m)m}{n}\right)=1$$
   as $A$ and $m$ are fixed.

We prove that any line $l$ contains at most $\frac{n}{m}$ points of $S\cap [1,n]^2$ by considering the possible slopes $q$ of $l$.
\\
\noindent
\bf Case 0: \it $l$ is horizontal or vertical. \rm \\
If $l$ is horizontal, it contains at most $$\lceil(n-A) /m^2\rceil m  + m=\lceil(n-2m^2) /m^2\rceil m  + m= $$
    $$=\lceil n /m^2 - 2\rceil m  +m\le \left( n/m^2-1\right) m + m=n/m$$
    points, since every horizontal line contains $m$ selected points from every consecutive block of $m^2$ points.\\
    If $l$ is vertical, it contains every $m$th lattice point, except within the first $A$ rows. Hence, the number of points in $l\cap [1,n]^2$ is at most
    $$\lceil (n-A)/m\rceil\le \frac{n}{m}.$$
\bf Case 1: $-1\le q\le 1, q\not =0$. \\
\rm Consider a general line  $l(x) = qx +b$ with slope $q$ satisfying $-1\le q\le 1$ and $q\not=0$. 
Partition the lattice $\nn^+\times\nn^+$ into vertical stripes $v_0,v_1, ...$, each of width $m$. The $k$th stripe, denoted by $v_k$ consists of all points $$[km, (k+1)m-1]\times\nn.$$ 
Observe that within each stripe $v_k$, the set $S\cap v_k$ consists of horizontal segments of length at most $m$, separated vertically by $m-1$ empty rows.\\
Further observe that if the slope of $l$ satisfies $|q|\le 1$, then $l$ can intersect at most one of the rows in each stripe $v_k$ that contains points of $S$, since these rows have a distance of $m$ units vertically and the stripe has width $m-1$ unit horizontally. Hence, in every stripe $v_k$, the line $l$ intersects $S$ in at most one point. The square $[1,n]^2$ intersects at most $$\left \lceil\frac{n}{m}\right \rceil<\frac{n}{m}+1$$ vertical stripes. Moreover, $v_0, ..., v_{m-1}\in[1, A]\times \nn$ as each of them has a width of $m$. Therefore, $S\cap [1,n]^2$ does not intersect them. Hence, the line $l$ contains at most $$\frac{n}{m}+1-m\le\frac{n}{m}$$ points of $S$ inside $[1,n]^2$.
\\\\
\noindent
\bf Case 2: \rm $1< q < +\infty$. Next, consider lines $l(x)=qx+b$ with $1< q < +\infty$. Let the points of $S\cap l\cap [1,n]^2$ be $l_1, ..., l_d$ sorted according to their $x$ coordinates. We aim to show that $d \le \frac{n}{m}$. \\
Denote the $x$ and $y$ coordinates of any point $P$ by $x(P), y(P)$ respectively.
To prove $d\le \frac{n}{m}$, it is sufficient to show that $y(l_i)+m\le y(l_{i+1})$ for all $i<d$. Clearly, if $l_i$ and $l_{i+1}$ lie within the same vertical stripe $v_j$, then $y(l_{i+1})-y(l_i)\ge m$, since the points cannot belong to the same row. If $l_i\in v_j, l_{i+1}\in v_{j+1}$, then $y(l_{i+1})\ge y(l_i)+(m+1)$ because it is impossible to have $y(l_{i+1})=y(l_i)+1$ when $q>1$. \\
If there is at least one empty vertical stripe $v_j$ between $l_i$ and $l_{i+1}$ then the $y$-coordinate of $l_{i+1}$ is at least $v_i+m$ because it increases by $m$ in $v_j$ alone. This completes the proof for lines with slope $q>1$.
\\\\
\noindent
\bf Case 3: \rm $-\infty < q < -1$.\\
We now apply an argument similar to Case 2. Consider the slope $q$ of $l$.
If the vertical distance between any two consecutive lattice points on $l$ is at least $m$, that is $y(l_i)-y(l_{i+1})\ge m$, then at most $\left \lfloor \frac nm \right \rfloor$ points can fit vertically in $[1,n]^2$. A vertical distance between two consecutive points can be smaller than $m$ only if they lie in consecutive stripes, because $y(l_{i+1})=q(x(l_{i+1})-x(l_i))+y(l_i)\le-1\cdot m+y(l_i)$ if $l_i$ and $l_{i+1}$ were separated by an empty vertical stripe. If they lie in consecutive stripes and $y(l_i)-y(l_{i+1})<m$ then the vertical distance is $m-1$ (see figure). Therefore, $$q=\frac{y(l_{i+1})-y(l_i)}{x(l_{i+1})-x(l_i)}=-\frac{m-1}{r},$$ where $r\in \mathbb{Z}^+$.

On the other hand, if $[1,n]^2$ contains more than $\frac{n}{m}$ points, then there must be a stripe containing multiple points, as each stripe has a width of $m$. If two consecutive points, $l_i$ and $l_{i+1}$ lie in the same stripe, then $$q=\frac{y(l_{i+1})-y(l_i)}{x(l_{i+1})-x(l_i)}=\frac{-am}{b}$$ for some $a\in \mathbb{Z}^+$ and integer $0<b<m$. Combining these, we get $\frac{-am}{b}=q=-\frac{m-1}{r}$ so $b(m-1)=amr$. Since $\gcd(m, m-1)=1$, it follows that $m|b(m-1)$ which implies $ m|b$. However this contradicts $0<b<m$. 
We can conclude that if $q<-1$ then $l$ contains at most $\frac{n}{m}$ points in $[1,n]^2$
\end{proof}
\bigskip
\begin{claim} \label{claimpperq}
    There exists a set of points $S\subseteq\nn^+\times \nn^+$ such that
    $$\displaystyle\liminf_{n\to\infty} \frac{|S\cap [1,n]^2|}{\frac{pn}{q}\cdot n} = 1$$
    for all $p,q \in \nn, \frac{p}{q}<1$ and each line contains at most $\frac{n}{m}$ points in $S\cap [1,n]^2$.
    $c=\frac{p}{q}<1$, for $p,q \in \nn$, $(p,q) = 1$.
\end{claim}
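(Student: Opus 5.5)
Write $c=\frac{p}{q}$ with $(p,q)=1$ and $p<q$, and set $m=q$, $A=2q^2$. The bound in the statement should read $\frac{pn}{q}$, i.e.\ $k(n)=cn$. The plan is to superimpose $p$ disjoint translated copies of the construction from \cref{claim1perm}. For $t=0,1,\dots,p-1$ let
$$T_t:=\{(i,j)\in \nn_{>A}\times\nn_{>A} \mid i-qj-qt \in\{0,1,\dots,q-1\} \bmod q^2\},$$
and define $S:=\bigcup_{t=0}^{p-1}T_t$. Equivalently, $S=\{(i,j)\in \nn_{>A}\times\nn_{>A} \mid i-qj\in\{0,\dots,pq-1\}\bmod q^2\}$. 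The residue windows $\{qt,\dots,qt+q-1\}$ for $0\le t<p$ are pairwise disjoint modulo $q^2$ because $pq\le q^2$, so the copies $T_t$ are disjoint. Each $T_t$ is $T_0$ (the set of \cref{claim1perm} with $m=q$) translated horizontally by $qt$, a multiple of $q$, and then cut off by the same margin $A$.

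\emph{Density.} Every row with index greater than $A$ contains $pq$ points of $S$ in each horizontal block of $q^2$ consecutive points. Hence each such row has at least $\frac{pn}{q}-O(q^2)$ points of $S$ in $[1,n]$. Summing over rows gives $|S\cap[1,n]^2|\ge \frac{p}{q}n^2-O(q^2 n)$, so the $\liminf$ of the ratio is at least $1$. It is also at most $1$: every point of $S\cap[1,n]^2$ lies on one of the $n$ vertical lines $x=i$, $i\le n$, and each of these carries at most $\frac{pn}{q}$ points by the line bound below. So the $\liminf$ equals $1$.

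\emph{Line bound.} I will show that for every $t$ and every line $l$, $|l\cap T_t\cap[1,n]^2|\le \frac{n}{q}$. Summing over the $p$ copies then gives at most $\frac{pn}{q}$ points on $l$. The argument reruns the proof of \cref{claim1perm} for $T_t$, checking that it never used the phase of the pattern.
\begin{itemize}
\item Horizontal lines: a row of $T_t$ still has exactly $q$ points in each period of length $q^2$, so the same ceiling estimate $\lceil (n-A)/q^2\rceil q+q\le n/q$ holds for any phase.
\item Vertical lines: $T_t$ still meets every column in every $q$-th lattice point above height $A$, so the bound $\lceil (n-A)/q\rceil\le n/q$ is unchanged.
\item Slopes $0<|{\rm slope}|\le1$: the stripe argument used only two facts. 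First, inside each vertical stripe $v_k=[kq,(k+1)q-1]\times\nn$ the set consists of horizontal segments separated by $q-1$ empty rows. Second, the first $q$ stripes lie in $[1,A]$. The shift by $qt$ keeps the stripe boundaries aligned (it is a multiple of $q$), so the horizontal runs of $T_t$ still sit inside single stripes, and the margin is unchanged.
\item Slopes $|{\rm slope}|>1$: Cases 2 and 3 used only the stripe structure, the vertical gap $q$ between occupied rows within a stripe, and the coprimality $\gcd(q,q-1)=1$. All of these persist for $T_t$.
\end{itemize}

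The main obstacle is this verification that \cref{claim1perm}'s case analysis is genuinely invariant under the shift by $qt$. The delicate points are where it spent slack from the margin $A$ (the ``$+1-m$'' in Case~1 and the ceiling in Case~0), and the geometry of consecutive stripes in Case~3. If the horizontal shift turns out to break the stripe alignment anywhere, I would instead describe $T_t$ as a vertical translate of $T_0$ by $t$ rows: the congruence $i-q(j+t)\equiv i-qj-qt$ shows $T_0$ moved up by $t$ rows has exactly the residue condition of $T_t$, so $T_t$ agrees with that translate, both being restricted to $\nn_{>A}\times\nn_{>A}$. I would then absorb the at most $p$ boundary rows into the margin by enlarging $A$ to $2q^2+q$, which changes the density only by $O(n)$.
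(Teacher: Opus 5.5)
Your proposal is correct and follows the paper's construction: $S$ is the union of $p$ phase-shifted copies of the $1/q$ pattern, i.e.\ $i-qj\in\{0,\dots,pq-1\}\bmod q^2$ off the margin, with the line bound $\frac{pn}{q}$ obtained by summing the $\frac{n}{q}$ bound of \cref{claim1perm} over the copies. Your check that the case analysis of \cref{claim1perm} is phase-invariant, and your matching upper bound for the density, supply details the paper only asserts; one small slip is that in the (unneeded) fallback the congruence shows $T_t$ is $T_0$ moved \emph{down} by $t$ rows, not up.
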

\begin{proof}
    We now extend the configuration constructed for the case $c=\frac{1}{m}$ to all rational values $c\in[0,1]$.\\
    For a rational number $c=\frac{p}{q}$, the construction is defined as the union of $p$ disjoint copies of the construction for $c=\frac{1}{q}$.\\
    Let this configuration be denoted by $S$. As before, the first $A$ rows and $A$ columns of $S$ are empty (where $A$ is the same constant used in the $c = 1/q$ case). In $[A+1, \infty)^2$ we define $$S := \{(i, j)\  |\  i-qj \in \{ 0, ..., pq-1\} \mod q^2\}.$$ 
    With this construction, in every square $[1,n]^2$, the number of points lying on a given line is $p$ times larger than in the configuration for $c=\frac{1}{q}$, because $S$ is created as the union of $p$ disjoint copies of the $c=\frac{1}{q}$ construction. Consequently, the density of $S$ is also $p$ times higher than in the $c=\frac{1}{q}$ case.
\end{proof}

\begin{claim} \label{claimirrac}
    There exists a set of points $S\subseteq\nn^+\times \nn^+$ such that
    \begin{equation} \label{eqirrac}
        \displaystyle\liminf_{n\to\infty} \frac{|S\cap [1,n]^2|}{c\cdot n} = 1
    \end{equation}
    for all irrational $0<c<1$ and each line contains at most $cn$ points in $S\cap [1,n]^2$.
\end{claim}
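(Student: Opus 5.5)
We construct $S$ for irrational $c$ by gluing together the rational constructions of Claim~\ref{claimpperq} for a sequence of rationals $c_j\uparrow c$, with each $c_j<c$, used on successive scales. (The normalisation in \eqref{eqirrac} should be read as $c\,n\cdot n$, matching Theorem~\ref{bigtheorem_linear}.) The point is to choose the rationals and the switching scales $N_1<N_2<\cdots$ so that two things hold.

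\begin{enumerate}[label=(\roman*)]
\item At every scale $n$, each line carries at most $cn$ points of $S\cap[1,n]^2$.
\item The density deficit at scale $n$ tends to $0$.
\end{enumerate}

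The simplest approach is to avoid patching different periodic patterns within one square, which would break the line estimates. Instead we nest the constructions. Take $c_j=p_j/q^{(j)}$ with a \emph{common} growing modulus, say $q_j=2^j$ and $p_j=\lfloor c\,2^j\rfloor$. The residue set $\{0,\dots,p_jq_j-1\}$ modulo $q_j^2$ used in Claim~\ref{claimpperq} is not monotone in $j$, however, so we proceed more concretely using the structure of the proof.

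The Claim~\ref{claimpperq} set is a union of $p$ translates of the $1/q$-pattern. Each translate has at most $n/q$ points on any line in $[1,n]^2$ and at least $n^2/q-O(qn)$ points in total, with error $O(q^2 n)$ coming from the empty margin $A=2q^2$. We therefore fix one large $q$ and approximate $c$ from below by $p/q$ with error at most $1/q$. This gives density at least $(p/q)/c\ge 1-1/(cq)$ while keeping the line bound $pn/q\le cn$.

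To reach density exactly $1$, we let $q$ increase along scales. On the region $[1,N_{j+1}]^2\setminus[1,N_j]^2$ we use the $p_j/q_j$ pattern, and we choose $N_j$ rapidly growing (e.g. $N_{j+1}\ge q_{j+1}^3 N_j$ and $N_j\ge q_j^4$). The old region then contributes negligibly: at most $N_j=o(n)$ points per line when $n\ge N_{j+1}$.

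The main obstacle is the line bound in the transition zone, i.e. verifying (i). For $N_j<n\le N_{j+1}$, the points of $S\cap[1,n]^2$ on a line $l$ split into three groups.
\begin{itemize}
\item Points inside $[1,N_j]^2$: at most $N_j$ of them, which is small but not $o$ of the slack unless we create slack.
\item Points in the current pattern: at most $p_jn/q_j$ of them.
\item Points in the L-shaped region beyond.
\end{itemize}
To create slack, we choose $p_j/q_j\le c-2/q_j$, still with $p_j/q_j\to c$. The available slack is then $2n/q_j\ge 2N_j/q_j$, and this exceeds the $N_j$ old points once $q_j$ is bounded. That is not automatic, so instead we make the old region sparse. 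Inside $[1,N_j]^2$ we keep the $j$-th pattern (stage $j-1$ sets remain, but we leave an empty margin). More cleanly, we define the stage-$j$ pattern on the L-shaped region and use the periodic-line argument of Claim~\ref{claim1perm} separately on each region. Each region's contribution to a line is bounded by (its width$)\cdot p_i/q_i$, as in the stripe arguments of Cases 1--3 applied to intervals $[N_i,N_{i+1}]$ in place of $[1,n]$. Summing over regions gives at most $\sum_i p_i(N_{i+1}-N_i)/q_i+O(\sum_i q_i^2)\le cn$, because each $p_i/q_i\le c-2/q_i$ absorbs the additive $O(q_i^2)$ boundary terms once $N_{i+1}-N_i\gg q_i^3$.

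Finally, the point count is at least $\sum_i (p_i/q_i)(N_{i+1}^2-N_i^2)-O(\text{lower order})$, and its ratio to $cn^2$ tends to $1$, which gives (ii). I expect the careful adaptation of the stripe argument (Cases 2 and 3) to sub-intervals and L-shaped regions to be the hardest step.
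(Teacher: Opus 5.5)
Your final strategy is a legitimate alternative to the paper's: approximants $p_i/q_i\le c-2/q_i$ with built-in slack, rapidly growing scales $N_j$, and no empty gaps between layers. The paper instead separates the layers $S_k$ by empty L-shaped gaps $G_k$ of width at least $2m_k^2+2m_{k+1}^2$. Each piece of a line, padded by gap, then looks exactly like a line in $S_k\cap[1,h]^2$ from Claim~\ref{claimpperq}, whose empty margin kills the additive errors.

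\textbf{The gap.} The step that carries (i) is false as you state it. You bound the contribution of the L-shaped region $R_i=[1,N_{i+1}]^2\setminus[1,N_i]^2$ by $(N_{i+1}-N_i)p_i/q_i+O(q_i^2)$, as if $R_i$ were a strip of width $N_{i+1}-N_i$. It is not.
\begin{itemize}
\item The vertical line $x=N_i+1$ lies in $R_i$ for all rows $1,\dots,N_{i+1}$. Every column of the $p/q$ pattern has density exactly $p/q$, so this line carries about $(p_i/q_i)N_{i+1}$ points there. That exceeds your bound by about $(p_i/q_i)N_i\gg q_i^2$.
\item Likewise, for $n=N_j+1$ the column $x=n$ carries about $(p_j/q_j)n$ points of $S\cap[1,n]^2$. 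Your summed bound is about $(p_{j-1}/q_{j-1})N_j+O(q_j^2)$, which is smaller whenever the approximants increase by more than $O(q_j^2/N_j)$.
\end{itemize}
So the inequality you sum is wrong, even though these examples do not violate the target $cn$. The same mis-accounting made your earlier three-group attempt look hopeless: you charged the old square by its full size $N_j$ rather than by the line's extent inside it.

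\textbf{The repair.} Charge pieces of the line, not regions.
\begin{itemize}
\item Since $l\cap[1,N_i]^2\subseteq l\cap[1,N_{i+1}]^2$ are nested segments, $l\cap R_i$ (and likewise $l\cap([1,n]^2\setminus[1,N_j]^2)$) has at most two components $\sigma$.
\item Let $h_\sigma$ be the number of rows $\sigma$ spans if $|\text{slope}|>1$, and the number of columns otherwise.
\item The stripe arguments localize to segments when applied to each of the $p_i$ translates of the $1/q_i$ pattern. For steep negative slopes, either all consecutive vertical gaps are at least $q_i$, or the slope is $-(q_i-1)/r$ and then no stripe contains two points of $l$. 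This gives at most $(p_i/q_i)h_\sigma+O(p_iq_i)$ points on $\sigma$.
\item The $h_\sigma$ count disjoint sets of rows (resp.\ columns), so $H:=\sum_\sigma h_\sigma\le n$.
\end{itemize}
Hence for $N_j<n\le N_{j+1}$ the line carries at most $(c-2/q_j)H+O(\sum_{i\le j}p_iq_i)$ points.
\begin{itemize}
\item If $H\le n/2$, this is below $cn$ because $n>N_j\ge q_j^4$.
\item Otherwise the slack $2H/q_j> N_j/q_j$ beats the $O(q_j^2)$ error.
\end{itemize}
This is precisely the disjoint-projection bookkeeping behind the paper's covering squares in Lemma~\ref{lemma3}.

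You also need $S\cap[1,n]^2$ to be empty or very sparse for small $n$; for $n<1/c$ it must be empty. So keep an empty initial block, for example by using Claim~\ref{claimpperq} with its margin on $[1,N_1]^2$.

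With these corrections your density estimate (ii) goes through as written. The slack device then gives a somewhat cleaner proof than the paper's gap construction, at the harmless cost of approximants lying $\gtrsim 1/q_j$ below $c$.
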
 

\begin{proof}
We provide a configuration $S$ that satisfies \eqref{eqirrac}. The configuration will be defined recursively, based on the digits of $c$ in base 10. \\ Denote the number formed by the first $k$ digits of $c$ by $c_k$.
 Let $(s_k)_{k\ge1}$ be a strictly increasing sequence of positive integers, which will be described later. \\
In each step, we extend the previously constructed configuration corresponding to $c_k$ within square $[1, s_k]^2$ to obtain a configuration for $c_{k+1}$ in the larger square $[1,s_{k+1}]^2$. \\
The $k$th extension consists of two components: a gap, denoted by $G_k$, and a part containing new points, denoted by $S_k$. Together with the previous construction within $[1,s_{k-1}]^2$, these parts completely fill the square $[1,s_k]^2$. Since $s_k\to \infty$, it follows that $$\bigcup_{k\in\nn} [1,s_k]^2$$ covers the entire first quadrant. The width of $G_k$ depends only on $c_k$ and $s_k$ depends on $g_{k+1}$ and $c_k$\\

\begin{figure}[!hbt]
\centering
\begin{tikzpicture}[scale = 0.2]
  \node (O) at (0,0) {};

  \def\s{3}      
  \def\cd {1.8}
  \def\cb {1}      
  \def\ca {0.4}   
  \def\rx{7}   
  \def\ry{15}
  \def\w{30}

  \fill[gray!20]
    (0,0) -- (0,\s) -- (\ca,\s) -- (\ca,\ca) -- (\s,\ca) -- (\s,0) -- cycle;

  \fill[gray!20]
    (\s,0) -- (\s+\cb,0) -- (\s+\cb,\s+\cb) -- (0,\s+\cb) -- (0,\s) -- (\s,\s) -- cycle;

  \fill[gray!20]
    (0,\s) -- (0,\s+\rx) -- (\cb,\s+\rx) -- (\cb,\s) -- cycle;

  \fill[gray!20]
    (\s,0) -- (\s+\rx,0) -- (\s+\rx,\cb) -- (\s,\cb) -- cycle;

  \fill[gray!20]
    (\rx + \s, 0) -- (\w , 0) -- (\w, \cd) -- (\rx + \s + \cd, \cd) -- (\rx + \s + \cd, \rx + \s + \cd) -- (\cd, \rx + \s + \cd) -- (\cd, \w) -- (0, \w) -- (0, \rx + \s) -- (\rx + \s, \rx + \s) -- cycle; 

    \draw (0, 30) -- (30, 30);
    \draw (3, 33) -- (33, 33);
    \draw (30, 0) -- (30, 30);
    \draw (33, 3) -- (33, 33);

  \draw (0,0) -- (\w,0);
  \draw (0,0) -- (0,\w);

  \draw (0,0) rectangle (\s,\s);
  \draw (\s, \ca) -- (\ca, \ca) -- (\ca, \s);
  \draw (0, \s + \rx) -- (\cb, \s + \rx) -- (\cb, \s + \cb) -- (\s + \cb, \s + \cb) -- (\s + \cb, \cb) -- (\s + \rx, \cb) -- (\s + \rx, 0);
  \draw (\cb, \s + \rx) -- (\s + \rx, \s + \rx) -- (\s + \rx, \cb);
  \draw (\w, \cd) -- (\rx + \s + \cd, \cd) -- (\rx + \s + \cd, \rx + \s + \cd) -- (\cd, \rx + \s + \cd) -- (\cd, \w);
  \node[below] at (\s, 0) {$s_1$};
  \node[below] at (10, 0) {$s_2$};
  \node at (6, 6) {$S_2$};
  \node at (1.7, 1.5) {$S_1$};
  \node at (10.5, 10.8) {$G_2$};
\end{tikzpicture}
\caption{The gaps in grey and $S_k$ in white}
\label{fig: emptygaps}
\end{figure}

Now describe the configuration $S$ inductively.\\
Since each $c_k$ is a positive rational number, it can be expressed as the ratio of two positive integers. As $c_k$ represents the truncation of $c$ after the $k$th digit, it can be written in the form of a fraction of two positive integers, whose denominator is $10^k$. Denote this denominator by $m_k$, that is $m_k=10^k$.~\\\\
To begin the construction, we create a gap $G_1$, consisting of the first $g_1=2m_1^2$ rows and columns. The points within any gap are permanently excluded from the selection throughout the entire construction of $S$.~\\\\
Next, within square $[g_1+1,s_1]^2$ let our configuration, be $S\cap S_1$ where $S_1$ is the configuration  for $c_1$,
as constructed in \cref{claimpperq} for the rational numbers of $[0,1]$.

Now proceed with the induction. 
Assume that we have already constructed $S$ within $[1,s_k]^2$ such that each line contains at most $c\cdot s_k$ points inside $[1,s_k]^2$,  $$\frac{|S\cap [1,s_k]^2|}{(s_k+g_{k+1})^2}\ge c_{k-1},$$ $m_{k}^2\mid s_k$ and $m_{k+1}^2\mid s_k +g_{k+1}$.

As $$\liminf \frac{|S_k\cap [1,n]^2|}{n^2}=c_k>c_{k-1}, $$ we can choose $s_k$ large enough to satisfy the previous inequality. By choosing $g_{k+1}\ge 2m_k^2+2m_{k+1}^2$, we can limit the density of points on a single line. \\

\begin{lemma} \label{lemma2}
There exists a sequence $(s_k)_{k\ge 1}$ and $(g_k)_{k\ge 1}$ of strictly increasing positive integers such that the density $$\frac{|S\cap [1,n]^2|}{|[1,n]^2|}$$ is at least $c_k$ for all $n\ge s_{k+2}$.
\end{lemma}
\begin{proof}
First, we prove that if
$$\frac{|S\cap [1,s_{k}+g_{k+1}]^2|}{|[1, s_{k}+g_{k+1}]^2|}\ge c_{k-1}$$
then the density of $S$ remains at least $c_{k-1}$ for all $n>s_{k}+g_{k+1}$. We set $s_k$ sufficiently large to ensure that the construction $S\cap S_{k}$ is at least $c_{k-1}$-dense within $[1,n]^2$ for every $s_{k}<n\le s_{k}+g_{k+1}$.

Since for all larger squares (i.e. for $n\ge s_{k+2}$) we provide an even stronger lower bound on the density, namely $c_{k}$, it is sufficient to verify the density condition for the squares $[1,n]^2$ with $s_{k}+ g_{k+1}< n\le  s_{k+1}+g_{k+2}$.

For $s_{k}+ g_{k+1}< n\le  s_{k+1}$ we prove by induction that the density is at least $c_{k-1}$.
By assumption, the density was at least $c_{k-1}$ for $n=s_k+g_{k+1}$. Suppose the density holds for $n$. For $n+1$ consider the rectangles $[1, n+1]\times[n+1,n+1]$ and $[n+1,n+1] \times [1, n+1]$. These are the beginning of the $(n+1)$th row and column so after the first $2m_{k+1}^2$ cells they are periodic by $m_{k+1}^2$ and have a relative density of $c_{k+1}$. Therefore the density of points in $[1, n+1]\times[n+1,n+1]\cup [n+1,n+1] \times [1, n+1]$ converges to $c_{k+1}$ meaning that for $n$ large enough the density is at least  $c_k>c_{k-1}$. This means that the entire $S$ has a density of at least $c_{k-1}$ in $[1,n+1]^2$ completing the induction. Setting $s_k$ large enough ensures that $n$ is sufficiently large for this density.
\end{proof}
\noindent
For the proof of lemma \ref{lemma3} we choose $s_{k+1}$ such that $s_{k+1}>s_k^2$. This choice clearly does not violate or contradict anything used about $s_{k+1}$ during the proof of lemma \ref{lemma2}.
\begin{note}
When selecting $s_k$ for the construction of $S\cap S_{k+2}$, we must take into account the constraints imposed on $s_k$ during the construction of $S_{k+1}$; we set $s_k$ as the maximum of these two requirements and $s_{k-1}^2+1$.
\end{note}

\begin{figure}[ht]
    \centering
    \begin{subfigure}[b]{0.48\textwidth}
        \centering
        \begin{tikzpicture}[scale = 0.15] 
            \node (O) at (0,0) {};
            \def\s{3} \def\cd {1.8} \def\cb {1} \def\ca {0.4} \def\rx{7} \def\w{30}
            \def\aab{1} \def\aac{7.27} \def\aad{2.48}
            
            \fill[gray!20] (0,0) -- (0,\s) -- (\ca,\s) -- (\ca,\ca) -- (\s,\ca) -- (\s,0) -- cycle;
            \fill[gray!20] (\s,0) -- (\s+\cb,0) -- (\s+\cb,\s+\cb) -- (0,\s+\cb) -- (0,\s) -- (\s,\s) -- cycle;
            \fill[gray!20] (0,\s) -- (0,\s+\rx) -- (\cb,\s+\rx) -- (\cb,\s) -- cycle;
            \fill[gray!20] (\s,0) -- (\s+\rx,0) -- (\s+\rx,\cb) -- (\s,\cb) -- cycle;
            \fill[gray!20] (\rx + \s, 0) -- (\w , 0) -- (\w, \cd) -- (\rx + \s + \cd, \cd) -- (\rx + \s + \cd, \rx + \s + \cd) -- (\cd, \rx + \s + \cd) -- (\cd, \w) -- (0, \w) -- (0, \rx + \s) -- (\rx + \s, \rx + \s) -- cycle; 
            
            \draw (0,0) -- (\w,0); \draw (0,0) -- (0,\w);
            \draw (0,0) rectangle (\s,\s);
            \draw (\s, \ca) -- (\ca, \ca) -- (\ca, \s);
            \draw (0, \s + \rx) -- (\cb, \s + \rx) -- (\cb, \s + \cb) -- (\s + \cb, \s + \cb) -- (\s + \cb, \cb) -- (\s + \rx, \cb) -- (\s + \rx, 0);
            \draw (\cb, \s + \rx) -- (\s + \rx, \s + \rx) -- (\s + \rx, \cb);
            \draw (\w, \cd) -- (\rx + \s + \cd, \cd) -- (\rx + \s + \cd, \rx + \s + \cd) -- (\cd, \rx + \s + \cd) -- (\cd, \w);
            
            \draw [color = red] (\s-0.5, 0) -- (\cd + \s + 12, \w);
            \draw [color = blue, fill={rgb,255:red,173; green,216; blue,230}, fill opacity=0.3] (\aad, 0) -- (\aad + \aab, 0) -- (\aad + \aab, \aab) -- (\aad, \aab) -- (\aad, 0);
            \draw [color = blue, fill={rgb,255:red,173; green,216; blue,230}, fill opacity=0.3] (\s + 0.05, \aab + 0.05) -- (\s + 0.05, \s + \rx - 0.05) -- (\s + \s + \rx - \aab, \s + \rx - 0.05) -- (\s + \s + \rx - \aab, \aab + 0.05) -- (\s + 0.05, \aab + 0.05);
            \draw [color = blue] (\aac, \w) -- (\aac, \s + \rx) -- (\w, \s + \rx);
            \fill [color ={rgb,255:red,173; green,216; blue,230}, opacity=0.3] (\aac, \s + \rx) -- (\aac, \w) -- (\w, \w) -- (\w, \s + \rx) -- (\aac, \s + \rx);
            \draw [dashed] (\s + \rx + \s + \rx, 0) -- (\s + \rx + \s + \rx, \s + \rx + \s + \rx) -- (0, \s + \rx + \s + \rx);
            \fill [color = blue, fill={rgb,255:red,173; green,216; blue,230}, fill opacity=0.6] (\aac, \s + \rx) -- (\aac, \s + \rx + \s + \rx) -- (\s + \rx + \aac, \s + \rx + \s + \rx) -- (\s + \rx + \aac, \s + \rx) -- (\aac, \s + \rx);
        \end{tikzpicture}
        \caption{Squares containing line $l$}
        \label{fig:nzetek2}
    \end{subfigure}
    \hfill
    \begin{subfigure}[b]{0.48\textwidth}
        \centering
        \begin{tikzpicture}[scale = 0.15]
            \node (O) at (0,0) {};
            \def\s{3} \def\cd {1.8} \def\cb {1} \def\ca {0.4} \def\rx{7} \def\w{30}
            \def\aab{3.82} \def\aac{6} \def\aad{18.8} \def \dd{0.27} \def \cc{1.45}
            
            \fill[gray!20] (0,0) -- (0,\s) -- (\ca,\s) -- (\ca,\ca) -- (\s,\ca) -- (\s,0) -- cycle;
            \fill[gray!20] (\s,0) -- (\s+\cb,0) -- (\s+\cb,\s+\cb) -- (0,\s+\cb) -- (0,\s) -- (\s,\s) -- cycle;
            \fill[gray!20] (0,\s) -- (0,\s+\rx) -- (\cb,\s+\rx) -- (\cb,\s) -- cycle;
            \fill[gray!20] (\s,0) -- (\s+\rx,0) -- (\s+\rx,\cb) -- (\s,\cb) -- cycle;
            \fill[gray!20] (\rx + \s, 0) -- (\w , 0) -- (\w, \cd) -- (\rx + \s + \cd, \cd) -- (\rx + \s + \cd, \rx + \s + \cd) -- (\cd, \rx + \s + \cd) -- (\cd, \w) -- (0, \w) -- (0, \rx + \s) -- (\rx + \s, \rx + \s) -- cycle; 
            
            \draw (0,0) -- (\w,0); \draw (0,0) -- (0,\w);
            \draw (0,0) rectangle (\s,\s);
            \draw (\s, \ca) -- (\ca, \ca) -- (\ca, \s);
            \draw (0, \s + \rx) -- (\cb, \s + \rx) -- (\cb, \s + \cb) -- (\s + \cb, \s + \cb) -- (\s + \cb, \cb) -- (\s + \rx, \cb) -- (\s + \rx, 0);
            \draw (\cb, \s + \rx) -- (\s + \rx, \s + \rx) -- (\s + \rx, \cb);
            \draw (\w, \cd) -- (\rx + \s + \cd, \cd) -- (\rx + \s + \cd, \rx + \s + \cd) -- (\cd, \rx + \s + \cd) -- (\cd, \w);
            
            \draw [color = red] (\s+ \rx + \cd + 3, 0) -- (0, \s + \rx + \cd + 7);
            \draw [color = blue, fill={rgb,255:red,173; green,216; blue,230}, fill opacity=0.3](\s + \rx + \ca , \aac - \ca) -- (\s + \rx + \ca , \s + \rx) --(\aac , \s + \rx) -- (\aac, \aac - \ca) -- (\s + \rx + \ca , \aac - \ca);
            \draw [dashed] (0, \aad - 4.5) -- (\aad - 4.5 , \aad - 4.5) -- (\aad - 4.5, 0);
            \draw [color = blue, fill={rgb,255:red,173; green,216; blue,230}, fill opacity=0.3] (6.53 -\dd, 10.8) -- (0, 10.8) -- (0, 17.33 - \dd) -- (6.53 - \dd, 17.33 - \dd) -- (6.53-\dd, 10.8);
            \fill [color = {rgb,255:red,173; green,216; blue,230}, opacity = 0.6] (6.53 -\dd, 10.8) -- (6.53 -\dd, \aad - 4.5) -- (6.53 - \dd -\aad + 4.5 + 10.8, \aad - 4.5) -- (6.53 - \dd -\aad + 4.5 + 10.8, 10.8);
            \draw [color = blue, fill={rgb,255:red,173; green,216; blue,230}, fill opacity=0.3] (10.8, 6.53 -\cc) -- (10.8, 0) -- (17.33 - \cc, 0) -- (17.33 - \cc, 6.53 - \cc) -- (10.8, 6.53-\cc);
            \fill [color = {rgb,255:red,173; green,216; blue,230}, opacity = 0.6] (10.8, 6.53 -\cc) -- (\aad - 4.5, 6.53 -\cc) -- (\aad - 4.5, 6.53 - \cc -\aad + 4.5 + 10.8) -- (10.8, 6.53 - \cc -\aad + 4.5 + 10.8);
        \end{tikzpicture}
        \caption{Line $l$ with negative gradient}
        \label{fig:negline}
    \end{subfigure}
    
    \caption{Comparison of positive and negative gradients}
    \label{fig:comparison}
\end{figure}

\begin{lemma} \label{lemma3}
Each line $l$ contains at most $c_k\cdot n$ points in $S\cap [1,n]^2$ for every $n\le  s_k$ (where $n,k\in\nn$).

\end{lemma}
\begin{proof}
We prove the lemma by considering the the slope $q$ of $l$.

\noindent
\bf Case 1: \rm $|q|\ge 1$ or $l$ is vertical. In this case we cover $l$ by disjoint squares that are vertically above each other. Each square is congruent (up to rotation by 180 degrees and reflection) to the part of a rational construction $S_k$ in $[1,n]^2$. While these squares may differ from the part they cover in the actual construction $S$, they contain the same points on $l$ as $S$ does.

Traversing from bottom to top, $l$ first enters $S_k$ through a gap of width at least $2m_k^2+2m_{k-1}^2$. We select the covering square $\Sigma$ such that one of its vertices is the bottom entry point of $l$ into the $2m_k^2$-wide neighbourhood of $G_k$ (or $G_{k+1}$, depending on which gap $l$ crosses upon entering $S_k$ from the bottom). The side length of $\Sigma$ is chosen so that it ends vertically $2m_k^2$ units above the exit point of $l$ from $S_k$. Each gap is at least as wide as the sum of the gap widths corresponding to the constructions it separates. Any two consecutive covering squares are strictly ordered vertically, covering different parts of the separating gap, with the top of the $k$th square lying below the bottom of the $(k+1)$th square.

If $l$ enters $S_k$ multiple times (see Figure \ref{fig:negline}), we assign a distinct square to each section of $l$ passing through $S_k$. Crucially, exactly one square covers each section of $l$.

Because each square is congruent to $S_k\cap[1,h]^2$, it contains at most $c_k \cdot h$ points on any line. Upon entering $\Sigma$, $l$ first passes through a gap of width $2m_k^2$. Relative to $l$, $\Sigma$ is geometrically equivalent to $S_k\cap [1,h]^2$, meaning that $l$ contains at most $c_k \cdot h$ points within $\Sigma$. Because squares do not overlap vertically, the sum of their side-length is at most $n$. Therefore the total number of points of $l$ in $[1,n]^2$ is $\sum_j c_{k_j}\cdot h_j\le \sum_j c\cdot h_j\le c\cdot n$.

\noindent
\bf Case 2: \rm $|q|<1$. In this case we cover $l$ by horizontally disjoint squares exactly as in Case 1.
\end{proof}
\end{proof}
\noindent Claim \ref{claim1perm}, Claim \ref{claimpperq} and Claim \ref{claimirrac} together fully prove Theorem \ref{bigtheorem_linear}.

\medskip
We can use the proof technique from Claim \ref{claimirrac} to verify the following lemma. 
We will use Lemma \ref{lowerboundlemma} in Section~\ref{criteria} to construct functions $k(n)$ that give $\liminf \frac{|S \cap [1,n]^2|}{n k(n)}=c$ for any given $c\in(0,1)$.\\
\begin{lemma}\label{lowerboundlemma}
For any given $k(n) \le n$, there exists a set $S$ containing no $k(n) + 1$ collinear points, which satisfies
$$\liminf_{n\to\infty} \frac{|S \cap [1,n]^2|}{k(n)\cdot n} \geq \liminf_{n\to\infty} \frac{k(n)}{n}.$$ 
\end{lemma}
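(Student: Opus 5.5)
Write $L=\liminf_{n\to\infty} k(n)/n$. If $L=0$ the statement is trivial, with $S=\emptyset$ for instance. So assume $L>0$. The plan is to reduce to the staircase construction of Claim~\ref{claimirrac}. The key remark is that we only need density $|S\cap[1,n]^2|/n^2\ge$ roughly $L$, and then
\[
\frac{|S\cap[1,n]^2|}{k(n)\,n}\ \ge\ \frac{|S\cap[1,n]^2|}{n^2}.
\]
This holds because $k(n)\le n$. Hence it suffices to build $S$ with two properties: $\liminf |S\cap[1,n]^2|/n^2\ge L$, and every line meets $S\cap[1,n]^2$ in at most $k(n)$ points.

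Choose rationals $d_j$ with denominators $m_j=10^j$ (truncations), increasing to $L$ from below. Fix $\varepsilon_j\downarrow 0$ and let $N_j$ be such that $k(n)\ge (L-\varepsilon_j)n$ for all $n\ge N_j$. Then choose $d_j<L-\varepsilon_j$ with $d_j\to L$; if $L=1$, use $d_j<1$ as well. Now run exactly the inductive gap/block construction of Claim~\ref{claimirrac}, built from the rational constructions of Claim~\ref{claimpperq} with parameters $c_j=d_j$. The only change is that the thresholds $s_j$ must also satisfy $s_j\ge N_{j+1}$, together with the growth conditions already used there ($s_{j+1}>s_j^2$, divisibility, gap widths $g_{j+1}\ge 2m_j^2+2m_{j+1}^2$). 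Before $s_1$ we place no points at all (everything is gap), so small $n$ where $k(n)$ may be tiny cause no trouble.

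Density follows from Lemma~\ref{lemma2}: for $n\ge s_{j+2}$ the density is at least $d_j$, so the liminf of the density is at least $\lim d_j=L$. For collinearity we mimic Lemma~\ref{lemma3}. Fix $n$ with $s_{j}<n\le s_{j+1}$ and a line $l$. Cover $l\cap[1,n]^2$ by vertically (or horizontally) disjoint squares $\Sigma$ of side $h_\Sigma$, each congruent to an initial piece of some rational construction with parameter $d_i$, $i\le j+1$. Each square then contributes at most $d_i h_\Sigma\le d_{j+1}h_\Sigma$ points, giving a total of at most $d_{j+1}n$.

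Since $n>s_j\ge N_{j+1}$ and $d_{j+1}<L-\varepsilon_{j+1}$, we get $d_{j+1}n\le (L-\varepsilon_{j+1})n\le k(n)$, as required. Integrality issues are absorbed because each rational block actually has at most $\lfloor d_i h\rfloor$ points, by the case analysis in Claim~\ref{claim1perm}.

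The main obstacle is this bookkeeping of indices. The only blocks present inside $[1,n]^2$ for $n\le s_{j+1}$ must have parameter $d_i$ with $d_i n\le k(n)$ at that very $n$. I would therefore shift the schedule by one: the block with parameter $d_{j+1}$ is only begun after $N_{j+1}$. I would also recheck that the covering-square argument of Lemma~\ref{lemma3} gives the bound with the largest parameter present rather than with $c$. The rest is inherited from Claim~\ref{claimirrac}.
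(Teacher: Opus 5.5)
Your proposal is correct and takes the same approach as the paper's proof. Both rerun the staircase construction of Claim~\ref{claimirrac} with rationals increasing to $\liminf k(n)/n$, enlarge the thresholds $s_j$ so that $k(n)/n$ exceeds the current block parameter once $n>s_j$, and then combine Lemmas~\ref{lemma2} and~\ref{lemma3} with $k(n)\le n$. Your extra care fills in points the paper leaves implicit: the trivial case $L=0$, keeping $[1,s_1]^2$ empty so that small $n$ cause no violation, and reading Lemma~\ref{lemma3} with the largest parameter $d_{j+1}$ present rather than the limit.
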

\begin{proof}
We use a slightly modified version of the construction presented in the previous irrational case. ~\\
\noindent
Let $c:= \liminf \frac{k(n)}{n}>0$ and $(c_k)_{k=1}^{\infty}$ be a monotone increasing sequence of rationals with $\lim_{k\to\infty}c_k=c$. It is possible to choose a suitable sequence $(c_k)$ even if $c$ is rational.
After setting $s_k$, $s_{k+1}$ can be chosen arbitrarily large, imposing additional constraints on  its minimal size while preserving the validity of the proof used in Claim \ref{claimirrac} for irrationals. Whenever $n>s_{k-1}$ for the new, increased sequence of $(s_k)$, 
$\frac{k(n)}{n}>c_{k}$ holds. Choose the sequence $(s_k)$ to be monotone increasing, such that $\lim_{k\to \infty} s_k=\infty$. 

\noindent
By Lemma~\ref{lemma3}, each line $l$ contains at most $c_k\cdot n<\frac{k(n)}{n}n=k(n)$ points in $S\cap [1,n]^2$ for every $s_{k-1}<n\le  s_k$. This proves that our construction is correct and provides a set of points $S$ such that
$$\liminf_{n\to\infty} \frac{|S\cap [1,n]^2|}{c n^2} = 1\text{, which implies}$$
$$\liminf_{n\to\infty} \frac{|S\cap [1,n]^2|}{ k(n)\cdot n}\ge\liminf_{n\to\infty} \frac{|S\cap [1,n]^2|}{ n^2} = c=\liminf_{n\to\infty}\frac{k(n)}{n}.$$

We used $k(n) \le n$ for every $n$ in the inequality of the last line.

\end{proof}

As a generalization, we provide a set with positive $\liminf$ for any polynomially growing functions $k(n)$. Determining the optimal achievable density, however, remains an open problem.

\subsection{Proof of Theorem \ref{posliminf_linear}}

In the following, we prove Theorem \ref{posliminf_linear}.
    We first construct a suitable point-set $S$, then prove that $S$ is dense, and finally verify that it satisfies the no-$(k(n)+1)$-in-line property.

    \begin{proof}[Proof of Theorem \ref{posliminf_linear}]
    We begin with the construction of the point set $S$ defined above.
    As in the previous constructions, $S$ only contains points with sufficiently large $x$ coordinates, specifically, we set the first $N$ columns of $S$ be empty.
    
    Let $A>0$ be a scaling parameter whose value will be chosen to be $4$ later. 
     We place the selected points into towers, and let the index of these towers be denoted by $m$. For each $m>M \in \Z^+$ we build a tower (red rectangles in Figure \ref{nalpha}) to $[(\frac{Am}{c})^{1/\alpha}, (\frac{Am+1}{c})^{1/\alpha}]$ on the x-axis of width 
    $$w_m = c^{-\frac1\alpha}\left((Am+1)^{1/\alpha} - ({Am})^{1/\alpha}\right)$$
    and height of 
    $$
    h_m = \frac{1}{4}c^{-\frac1\alpha}(Am)^{1/\alpha}
    $$
    where $0<\alpha <1$.
    Of course, these expressions only make sense when $m$ is sufficiently large.

    Divide the towers into squares. Each tower is filled with squares of side-length $w_m$ on top of each other. We fill each tower with the maximal number of squares, denote this by 
    $$
    s_m =\left\lfloor\frac{h_m}{w_m}\right\rfloor
    $$
    In each square we put a modular hyperbola,  which guarantees at least $\frac32w_m - o(1) > w_m$ points in general position in each square if $m$ is large enough \cite{hall1975some}.

\begin{figure}[h!!]
\centering
\begin{tikzpicture}[scale = 0.1]
        \node at (47, 6) {$\textcolor{red}{h_5}$};
        \node at (60, -4) {$\textcolor{red}{w_6}$};
	  \draw[->, thick] (0,0) -- (90,0);
	   \draw[->, thick] (0,0) -- (0,45);
	
	  \draw[thick, domain=0:90] plot(\x, {0.25*\x}) node[right] {$y = 0.25x$};

	    \filldraw[blue!30, draw=black] (25.6,0) rectangle (28.9,3.3);
        \draw[draw=red] (25.6,0) rectangle (28.9,6.4);
	    \node at (25, -4) {$16^2$};
	    \node at (30, -8) {$17^2$};
	    \draw[thick] (25.6,-1) -- (25.6,1);
	    \draw[thick] (28.9,-1) -- (28.9,1);
	    \fill (27.86, 2.02) circle (5pt);
	    \fill (26.06, 2.59) circle (5pt);
	    \fill (25.69, 0.25) circle (5pt);
	    \fill (26.83, 0.91) circle (5pt);
	    \fill (28.12, 0.15) circle (5pt);
	    \fill (28.71, 3.14) circle (5pt);
	    \fill (28.82, 0.29) circle (5pt);
	    \fill (27.05, 0.11) circle (5pt);
	    \fill (27.82, 3.21) circle (5pt);
	    \fill (28.48, 2.91) circle (5pt);

	    \filldraw[blue!30, draw=black] (40,0) rectangle (44.1,4.1);
	    \filldraw[green!30, draw=black] (40,4.1) rectangle (44.1,8.2);
        \draw[draw = red] (40, 0) rectangle (44.1, 10);
	    \fill (42.20, 2.37) circle (5pt);
	    \fill (42.23, 1.74) circle (5pt);
	    \fill (42.24, 3.41) circle (5pt);
	    \fill (41.42, 1.24) circle (5pt);
	    \fill (43.37, 1.98) circle (5pt);
	    \fill (43.43, 1.86) circle (5pt);
	    \fill (40.93, 4.05) circle (5pt);
	    \fill (43.15, 2.43) circle (5pt);
	    \fill (43.81, 1.55) circle (5pt);
	    \fill (40.61, 2.68) circle (5pt);
	    \fill (42.44, 6.77) circle (5pt);
	    \fill (42.17, 5.88) circle (5pt);
	    \fill (40.62, 8.19) circle (5pt);
	    \fill (43.87, 4.54) circle (5pt);
	    \fill (43.23, 4.58) circle (5pt);
	    \fill (40.79, 6.09) circle (5pt);
	    \fill (42.45, 5.06) circle (5pt);
	    \fill (42.29, 6.49) circle (5pt);
	    \fill (41.40, 5.77) circle (5pt);
	    \fill (42.95, 4.10) circle (5pt);

	    \filldraw[blue!30, draw=black] (57.6,0) rectangle (62.5,4.9);
	    \filldraw[green!30, draw=black] (57.6,4.9) rectangle (62.5,9.8);
        \draw[draw = red] (57.6, 0) rectangle (62.5,14.4);
	    \fill (58.68, 4.80) circle (5pt);
	    \fill (59.63, 1.60) circle (5pt);
	    \fill (60.12, 4.46) circle (5pt);
	    \fill (61.67, 1.55) circle (5pt);
	    \fill (59.43, 1.98) circle (5pt);
	    \fill (62.22, 1.49) circle (5pt);
	    \fill (57.93, 3.53) circle (5pt);
	    \fill (60.61, 0.40) circle (5pt);
	    \fill (60.66, 0.58) circle (5pt);
	    \fill (57.87, 3.05) circle (5pt);
	    \fill (60.56, 7.40) circle (5pt);
	    \fill (62.19, 8.07) circle (5pt);
	    \fill (62.28, 9.12) circle (5pt);
	    \fill (61.59, 9.58) circle (5pt);
	    \fill (59.06, 7.47) circle (5pt);
	    \fill (57.64, 8.11) circle (5pt);
	    \fill (58.10, 8.08) circle (5pt);
	    \fill (62.28, 6.57) circle (5pt);
	    \fill (59.59, 8.90) circle (5pt);
	    \fill (60.51, 7.98) circle (5pt);

	    \filldraw[blue!30, draw=black] (78.4,0) rectangle (84.1,5.7);
	    \filldraw[green!30, draw=black] (78.4,5.7) rectangle (84.1,11.4);
	   	\filldraw[orange!30, draw=black] (78.4,11.4) rectangle (84.1,17.1);
        \draw[draw = red] (78.4, 0) rectangle (84.1, 19.6);
	   	\draw[thick] (78.4,-1) -- (78.4,1);
	   	\draw[thick] (84.1,-1) -- (84.1,1);
	   	\node at (78.4, -4) {$28^2$};
	   	\node at (85, -6) {$29^2$};\fill (79.47, 4.55) circle (5pt);
\fill (79.70, 2.77) circle (5pt);
\fill (83.76, 0.06) circle (5pt);
\fill (83.61, 0.54) circle (5pt);
\fill (79.65, 5.22) circle (5pt);
\fill (79.72, 1.26) circle (5pt);
\fill (80.32, 4.80) circle (5pt);
\fill (80.01, 0.77) circle (5pt);
\fill (79.44, 0.05) circle (5pt);
\fill (80.33, 5.58) circle (5pt);
\fill (82.73, 4.06) circle (5pt);
\fill (80.89, 1.04) circle (5pt);
\fill (83.99, 4.93) circle (5pt);
\fill (80.36, 2.66) circle (5pt);
\fill (79.53, 5.09) circle (5pt);
\fill (82.09, 1.26) circle (5pt);
\fill (81.22, 3.97) circle (5pt);
\fill (83.72, 0.97) circle (5pt);
\fill (82.04, 4.93) circle (5pt);
\fill (80.31, 2.64) circle (5pt);
\fill (82.12, 1.56) circle (5pt);
\fill (80.76, 10.38) circle (5pt);
\fill (80.63, 5.84) circle (5pt);
\fill (78.78, 9.08) circle (5pt);
\fill (81.84, 11.07) circle (5pt);
\fill (81.60, 6.91) circle (5pt);
\fill (82.29, 7.07) circle (5pt);
\fill (82.15, 6.72) circle (5pt);
\fill (81.85, 6.58) circle (5pt);
\fill (80.81, 6.93) circle (5pt);
\fill (79.37, 8.05) circle (5pt);
\fill (83.57, 6.24) circle (5pt);
\fill (79.70, 6.87) circle (5pt);
\fill (84.08, 5.87) circle (5pt);
\fill (81.99, 11.05) circle (5pt);
\fill (80.73, 7.78) circle (5pt);
\fill (79.27, 7.21) circle (5pt);
\fill (81.50, 16.10) circle (5pt);
\fill (83.94, 12.11) circle (5pt);
\fill (80.12, 16.81) circle (5pt);
\fill (82.30, 11.73) circle (5pt);
\fill (83.02, 14.11) circle (5pt);
\fill (79.39, 16.40) circle (5pt);
\fill (79.08, 14.13) circle (5pt);
\fill (82.38, 15.64) circle (5pt);
\fill (82.39, 13.80) circle (5pt);
\fill (83.66, 13.26) circle (5pt);
\fill (81.54, 14.75) circle (5pt);
\fill (83.72, 14.23) circle (5pt);
\fill (79.05, 15.19) circle (5pt);
\fill (81.16, 15.71) circle (5pt);
\fill (83.55, 12.06) circle (5pt);
\fill (81.80, 12.74) circle (5pt);
\fill (82.34, 14.78) circle (5pt);
\fill (80.39, 14.73) circle (5pt);
\fill (81.92, 16.03) circle (5pt);
\fill (80.40, 16.50) circle (5pt);
\end{tikzpicture}
\caption{Point set $S$ if $k(n) = n^{0.5}$, with $m\ge 4$}
\label{nalpha}
\end{figure}

    \subsubsection{Proving the density}

    To compute the density, we need to estimate the size of the squares.
    We approximate $w_m$ with Taylor-series with the Lagrange remainder, i.e. for some $\varepsilon\in(0, 1)$:
    $$
    w_m = c^{-\frac1\alpha}\left(\frac1\alpha (Am)^{\frac1\alpha -1}+ \frac12 \frac1\alpha \left(\frac{1}{\alpha }-1\right)(Am + \varepsilon)^{\frac1\alpha -2}\right)
    $$
    If $m$ is large enough, the following inequality holds:
    $$
    \frac12 \frac1\alpha \left(\frac{1}{\alpha }-1\right)(Am + \varepsilon)^{\frac{1}{\alpha}-2} \le \frac{1}{2\alpha^2}(Am)^{\frac{1}{\alpha}-2}
    $$
    Rearranging the terms yields
    $$
    1- \alpha \le \left(\frac{Am}{Am+\varepsilon}\right)^{\frac1\alpha - 2},
    $$
    Which holds if $m$ is large enough.

    Thus we get the following upper and lower bounds:
    $$
    c^{-\frac1\alpha}\frac1\alpha (Am)^{\frac1\alpha -1}\le w_m \le c^{-\frac1\alpha}\frac1\alpha (Am)^{\frac1\alpha -1} + c^{-\frac1\alpha}\frac{1}{2\alpha^2}(Am)^{\frac{1}{\alpha} -2 }
    $$
    Approximating $s_m$:
    $$
    s_m \ge \frac{h_m}{w_m} -1 \ge \frac{\frac{1}{4}(Am)^{\frac1\alpha}}{\frac1\alpha (Am)^{\frac1\alpha -1} + \frac{1}{2\alpha^2}(Am)^{\frac{1}{\alpha} -2 }}-1 =  \frac{\frac{1}{4} + \frac\alpha4 \frac{1}{2\alpha^2}(Am)^{ -1 } - \frac\alpha4 \frac{1}{2\alpha^2}(Am)^{-1 }}{\frac1\alpha (Am)^{ -1} + \frac{1}{2\alpha^2}(Am)^{-2 }}-1 =$$ 
    $$
    = \frac14 \alpha Am -\frac{ \frac\alpha4 \frac{1}{2\alpha^2}(Am)^{-1 }}{\frac1\alpha (Am)^{ -1} +  \frac{1}{2\alpha^2}(Am)^{-2 }}-1  = \frac14 \alpha Am -\frac{\frac\alpha4}{2\alpha  + (Am)^{-1}}-1\ge 
    \frac14 \alpha Am - \frac{1}{8} -1 
    \ge 
    \frac14\alpha Am-2$$
    Also:
    $$
    s_m\le \frac{h_m}{w_m}\le \frac14\alpha Am
    $$
    Thus:
    $$
    \frac14 \alpha Am - 2\le s_m\le \frac14 \alpha Am
    $$   
    Let $M_0$ be the index of the first tower containing a positive number of points. Let $M_n$ be the index of the last tower, that is entirely inside $[1, n]^2$. 
    $$
    \left(\frac{AM_n + 1}{c}\right)^\frac{1}{\alpha} \le n
    $$
    $$
    M_n = \left \lfloor\frac{cn^\alpha - 1}{A}\right \rfloor
    $$   
    In a general $[1,n]^2$ square the number of points is
    $$
    |S \cap [1,n]^2| 
    \ge \sum_{M_0\le m\le M_n} w_m \cdot s_m
    \ge \sum_{M_0\le m \le M_n} \left(c^{-\frac1\alpha}\frac1\alpha (Am)^{\frac1\alpha -1}\right) \cdot \left(\frac14 \alpha Am - 2\right)
    $$
    Thus, for the constant $L = 8c^{-\frac1\alpha} \frac1\alpha A^{\frac1\alpha -1}$:
    
    $$
    |S \cap [1,n]^2| \ge   \frac14 \sum_{M_0\le m\le M_n}c^{-\frac1\alpha} (Am)^{\frac{1}{\alpha}} - Lm^{\frac1\alpha -1} = \frac14 \sum_{M_0\le m\le M_n} g(m)
    $$
    Where 
    $$g(m) = c^{-\frac1\alpha} (Am)^{\frac{1}{\alpha}} - Lm^{\frac1\alpha -1} = m^{\frac1\alpha -1}(c^{-\frac1\alpha}A^{\frac1\alpha}m-L )$$
    Note that $g(m)$ is a monotone increasing function if $m \ge L\cdot (c/A)^{1/\alpha}$. Choose $M_0$ accordingly. Nonetheless, we can rewrite the sum as an integral with $M_n = { \lfloor\frac{cn^\alpha - 1}{A}\rfloor}$:
    $$
    |S \cap [1,n]^2|\geq \frac14 \int_{M_0-1}^{M_n} g(x) dx
    = \frac14 \int_0^{\frac{cn^\alpha}{A}} g(x)dx -\frac14\int_0^{M_0-1}g(x) dx - \frac14 \int_{ \lfloor\frac{cn^\alpha - 1}{A}\rfloor}^{\frac{cn^\alpha}{A}} g(x)dx
    $$
    $$
   |S \cap [1,n]^2| \ge \left[\frac14 \frac cA \frac{\alpha}{\alpha+1}n^{\alpha + 1}  - Dn\right] -  E- [Fn - G]
    $$
    for some constants $D, E, F, G$. Now examining the $\liminf$:
    $$
    \liminf \frac{|S \cap [1,n]^2|}{n \cdot cn^\alpha}
    \geq \liminf \frac{\frac14 \frac cA \frac{\alpha}{\alpha+1}n^{\alpha + 1}  - (D+F)n  - (E+G)}{n \cdot cn^\alpha}$$
    By the choice of $A=4$, this limit equals to $$\frac{1}{16} \frac{\alpha}{\alpha + 1}
    $$
    which is indeed positive.
    \subsubsection{Proving that $S$ is a no-$\mathbf{(k(n)+1)}$-in-line set}
    
    We now prove that the construction satisfies the no-$k(n)+1$-in-line property. We check that every line has less than $cn^\alpha$ points for every $n$. Denote the index of the last tower which intersects $[1, n]^2$ by $M_n'$
    $$
    \left(\frac{AM_n'}{c}\right)^\frac{1}{\alpha} \le n
    $$
    $$
    M_n' = \left\lfloor\frac{cn^\alpha}{A}\right\rfloor\le \frac{cn^\alpha}{A}
    $$
    Denote the number of towers in $[1, n]^2$ by $T_n$. Since we omitted the first couple of towers of $S$, $T_n = M_n' - M_0' + 1$, where $M_0'>1$ is the index of the first non-empty tower. Thus:
    $$
    T_n <  \frac{cn^\alpha}{A}
    $$
    Denote the number of points on the examined line by $\#l \in \mathbb N$. We always need $\#l \le cn^\alpha$. Let the examined line be in the form of $l: y = q(x - r)$.

    \subparagraph{\bf I. Vertical lines: }
    A vertical line intersects at most one tower and a single tower contains a maximum of $s_{M_n'} \le \frac14\alpha cn^\alpha$ squares. Since each square contains points in general position, a vertical line can contain at most
    $$\#l \le 2\cdot s_{M_n'} \le \frac12\alpha cn^\alpha <cn^\alpha $$
    points.
    
    \subparagraph{\bf II. $\mathbf{-2 < q<2}$: }
    In this case, the line intersects at most two squares in each tower, thus the maximal possible number of points on the line is
	$$\# l \le 2\cdot 2T_n < 2\cdot 2\cdot cn^\alpha/A  =  cn^\alpha$$

    \subparagraph{\bf III./a) $\mathbf{2\le q}$ and the line intersects at most two towers: }
    The line intersects at most $2\cdot s_{M_n'}$ squares. The upper bound is even stronger in this case:
    $$\#l \le 2\cdot 2\cdot s_{M_n'}\le cn^\alpha$$

    \subparagraph{\bf III./b) $\mathbf{2\le q}$ and the line intersects three or more towers: } \label{haromb}
	We calculate the number of towers between $r$, and the point where $l$ intersects $y = \frac14x$. This intersection is located at $x = \frac{qr}{q- \frac14}$ as seen in Figure \ref{IIIbq2}.
    If there are $T_x$ towers till $x$ and $T_r$ till $r$, then the number of towers the line intersects is $ T_x - T_r + 1$:
    $$
    3\le T_x - T_r + 1\le \left\lfloor\frac{c\left(\frac{qr}{q- \frac14}\right)^\alpha }{A}\right\rfloor - \left\lfloor \frac{cr^\alpha}{A}\right\rfloor +1 \le \frac{c\left(\frac{qr}{q- \frac14}\right)^\alpha}{A} - \frac{cr^\alpha}{A} + 2 \le 3\cdot \left(\frac{c\left(\frac{qr}{q- \frac14}\right)^\alpha}{A} - \frac{cr^\alpha}{A}\right)
    $$
	In a single tower, the line intersects at most $q+1 \le \frac32q$ squares, so it intersects at most $3q$ points in each square. Therefore, the number of points on $l$ is at most:
    $$
    \#l \le 3\cdot\left(\frac{c\left(\frac{qr}{q- \frac14}\right)^\alpha}{A} - \frac{cr^\alpha}{A}\right)3q= \frac{ 9r^\alpha c}{A}\left(\left(\frac{q}{q-\frac14}\right)^\alpha - 1\right)q$$
    Using $x^\alpha -1 \le \alpha(x-1)$ if $x\ge 1$:
    $$
    \left(\left(\frac{q}{q-\frac14}\right)^\alpha - 1\right)q \le \alpha q \left(\frac{q}{q - \frac14 } - 1 \right) = \alpha\frac{ 1}{4 -\frac1q} \le \frac27 \alpha
    $$
    Since we can assume $r\le n$, we have
$$\# l \le \frac{18}{7A}cn^\alpha < cn^\alpha$$

\subparagraph{\bf IV.  $\mathbf{q\le -2}$:}
    Denote the line by $l$ as shown in Figure \ref{IVq-2}. Let $X_1$ be the intersection point of $l$ and the $x$-axis, and $X_2$ be the intersection of $l$ and $y = 1/4 x$. Denote the midpoint of segment $X_1X_2$ by $X$, and denote the reflection of $l$ to the horizontal line passing through $X$ by $l'$. Obviously for the slope $q'$ of $l'$ we have $q' = -q$. Every tower intersected by $l$ is intersected by $l'$ as well, therefore the same upper bound as in Case III./b works in this case as well.

\begin{figure}[htbp]
    \centering
    \begin{minipage}{0.48\textwidth}
        \centering
        \begin{tikzpicture}[scale = 0.065]
            \fill (40, 10) circle (15pt);
            \fill (60, 0) circle (15pt);
            \fill (50, 5) circle (15pt);
            \node at (40, 15) {$X_2$};
            \draw[dashed] (40, 10) -- (40, 0);
            \node at (60, -4) {$X_1$};
            \node at (57, 5) {$X$};
            \draw[->, thick] (0,0) -- (90,0);
            \draw[->, thick] (0,0) -- (0,70);
            \draw[thick, domain=0:90] plot(\x, {0.25*\x}) node[left, xshift= -25pt] {$y = 0.25x$};
            \draw[thick, domain=8:70] plot(\x, {-0.5*(\x -60)}) node[right] {$l$};
            \draw[thick, domain=25:90] plot(\x, {0.5*(\x - 40)}) node[above] {$l'$};
        \end{tikzpicture}
        \caption{IV. $q\le -2$}
        \label{IVq-2}
    \end{minipage}\hfill
    \begin{minipage}{0.48\textwidth}
        \centering
        \begin{tikzpicture}[scale = 0.065]
            \fill (50, 0) circle (15pt);
            \fill (60, 0) circle (15pt);
            \node at (50, -4) {$r$};
            \node at (60, -4) {$x$};
            \draw[->, thick] (0,0) -- (90,0);
            \draw[->, thick] (0,0) -- (0,70);
            \draw[thick, domain=0:90] plot(\x, {0.25*\x}) node[above] {$y = 0.25x$};
            \draw[thick, domain=45:70] plot(\x, {1.5*(\x -50)}) node[right] {$l$};
            \draw[draw=red] (49.5,0) rectangle (50.5,12.375);
            \draw[draw=red] (52.5,0) rectangle (54,13.125);
            \draw[draw=red] (55.5,0) rectangle (57.5,13.875);				
            \draw[draw=red] (59.5,0) rectangle (62,14.875);
        \end{tikzpicture}
        \caption{III/b) $q\ge 2$ }
        \label{IIIbq2}
    \end{minipage}
\end{figure}

\medskip
\noindent
This concludes the proof of Theorem \ref{posliminf_linear}.
\begin{remark}
    We did not attempt to optimize the density of the $\liminf$ with this type of construction, it might be larger, we suspect that in fact, this construction is not optimal. Furthermore, we checked that this 'tower' construction works for polynomially growing functions $k(n)$ only, and does not provide a positive density set for slower growing functions.
\end{remark}
\end{proof}

\section{Criteria for large $\displaystyle\liminf_{n\to\infty}\frac{|S_n|}{k(n)n}$}\label{criteria}

We begin this section by proving two theorems about the relationship of $\liminf \frac{|S_n|}{k(n)\cdot n}$ and the growth rate of $k(n)$.

\begin{theorem}
Let $k:\nn^+\to\mathbb{R}^+$ be a monotone increasing function and $S\subseteq\zz^{2}$ a set of lattice points such that $\left|S\cap\ell\cap[1,n]^{2}\right|\le k(n)$ for all lines $\ell$. If $S_n:=S\cap [1,n]^2$ satisfies
$$\liminf_{n\to\infty} \frac{|S_n|}{k(n)\cdot n} \ge 0.897,$$
then $k(n) = \Omega( n^c)$, where $c$ is a positive absolute constant.
\end{theorem}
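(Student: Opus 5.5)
The plan is to show a \emph{growth gap}: there are absolute constants $\lambda>1$ and $\delta>0$ such that, for all sufficiently large $n$,
\[
k(n)\ge (1+\delta)\,k(\lfloor n/\lambda\rfloor).
\]
Iterating this along $n,\ n/\lambda,\ n/\lambda^2,\dots$ then gives $k(n)\ge C(1+\delta)^{\log_\lambda n}=C n^{c}$ with $c=\log(1+\delta)/\log\lambda>0$, which is the statement. So I would assume for contradiction that $k(n)<(1+\delta)k(n/\lambda)$ for some large $n$. Then $k$ is essentially constant, equal to about $k(n)$, on the whole range $[n/\lambda,n]$, and I would show that $|S_n|<0.897\,k(n)n$.

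\textbf{Line families.} The upper bound on $|S_n|$ comes from double counting along lines in a few directions, using that each line is bounded by $k$ of the \emph{smallest} square containing the relevant segment:
\begin{itemize}
\item a horizontal or vertical line gives at most $k(n)$ points in $[1,n]^2$;
\item the anti-diagonal $x+y=s$ with $s\le n$ lies inside $[1,s]^2$, so it carries at most $\min\{k(s),\,s-1\}$ points;
\item the diagonal $x-y=d$ carries at most $\min\{k(n),\,n-|d|\}$ points.
\end{itemize}
The trivial length bounds ($s-1$ and $n-|d|$) matter near the corners, while the $k$-bounds at smaller scales matter near the origin. I would also use the density hypothesis at the intermediate scales $m\in[n/\lambda,n]$, namely $|S_m|\ge 0.897\,k(m)\,m\approx 0.897\,k(n)\,m$. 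This forces many points into $[1,n/\lambda]^2$ as well as into every L-shaped shell $[1,m]^2\setminus[1,m']^2$. Meanwhile each shell is crossed by only about $(m-m')$ rows and columns, each carrying at most about $k(n)$ points.

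\textbf{Fractional covering.} I would put nonnegative weights on these line families, over several scales $m\in[n/\lambda,n]$, so that every lattice point of $[1,n]^2$ receives total weight at least $1$. Summing the line bounds against these weights gives an inequality of the form
\[
\sum_j \alpha_j |S_{m_j}| \le \beta\, k(n)\, n\,(1+O(\delta)).
\]
Substituting the density lower bounds $|S_{m_j}|\ge 0.897\,k(m_j)\,m_j$ yields a contradiction once $0.897$ exceeds the value of the associated linear program. The essential idea is that a near-extremal set must nearly saturate every row and column at every scale. That saturation pushes mass toward the far edges of each shell, and the anti-diagonals near the corners together with the diagonals running toward $(n,1)$ then cannot absorb it.

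\textbf{Main obstacle.} The hard part is choosing the weights, the directions and the ratio $\lambda$ so that the resulting LP value is genuinely below $1$, and specifically at most $0.897$. I would first attempt to compute it with only the four directions (horizontal, vertical, diagonal, anti-diagonal) and a continuous limit: replace $S$ by a density function on $[0,1]^2$ with line-integral constraints and scale-invariance coming from $k$ being nearly constant. I would then optimise over $\lambda$ numerically, expecting the constant $0.897$ to come out of this optimisation. If four directions do not suffice, I would add the slopes $\pm2$ and $\pm1/2$.
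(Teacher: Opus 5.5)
Your outer reduction is fine. A uniform gap $k(\lambda n)\ge(1+\delta)k(n)$ for all large $n$ does give $k(n)=\Omega(n^c)$. Arguing by contradiction on a window where $k$ is nearly constant, using the density hypothesis at several intermediate scales together with row, column and diagonal bounds, is also the right setting: the paper's inequality is one instance of the linear combination $\sum_j\alpha_j|S_{m_j}|\le\beta\,k(n)\,n$ you describe.

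\textbf{The gap.} The proposal stops exactly where the proof has to start. You never exhibit weights that produce a contradiction. Instead you defer to an LP you have not solved and expect $0.897$ to come out, so nothing in the proposal shows the LP value is even below $1$. The heuristic you give for what closes the argument also points at the wrong lines. Anti-diagonals near the corners, diagonals running toward $(n,1)$, and the trivial length bounds play no role.

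\textbf{The missing step: a corner-tile bound.} Fix $n_0$ beyond which the density exceeds $1-\varepsilon$ (say $\varepsilon=0.1036$), and let $T_i=[(i-1)n_0+1,in_0]^2$. The set $[1,in_0]^2\setminus T_i$ is the union of the first $(i-1)n_0$ columns and the first $(i-1)n_0$ rows of $[1,in_0]^2$, and these intersect in $[1,(i-1)n_0]^2$. Inclusion--exclusion together with the row and column bound $k(in_0)$ gives
\[
|S\cap T_i|\ \ge\ |S_{in_0}|+|S_{(i-1)n_0}|-2(i-1)n_0\,k(in_0).
\]
So saturation at two consecutive scales forces mass into the far \emph{corner} of each L-shaped shell. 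These corners all lie in the band $|x-y|<n_0$, which consists of $2n_0-1$ lines of slope $1$, so $\sum_{i=1}^m|S\cap T_i|\le 2n_0\,k(mn_0)$. If $k(mn_0)<(1+\delta)k(n_0)$, monotonicity turns the two bounds into
\[
(1-\varepsilon)m^2\le(1+\delta)(m^2-m+2).
\]
This is false for $m=4$ and $\delta<0.024$, which is exactly your growth gap with $\lambda=4$.

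The paper uses the same tile inequality but runs it as an explicit recursion up to $k(5n_0)\ge 1.00054\,k(n_0)$. The constant $0.897$ is an artifact of that recursion, not the value of an optimised LP.
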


\begin{proof}
By the condition on $S_n$, there is a constant $n_0$ satisfying $\frac{|S \cap [1,n]^2|}{n k(n)} >1-\varepsilon$ for every $n \ge n_0$, where $\varepsilon=0.1036$. Pick $n_0$ such that it further satisfies $k(n_0) > 0$. Subdivide $[1, mn_0]^2$ into $m\times m$ tiles of size $n_0\times n_0$. 
 
 By the definition of $n_0$, there are at least $(1-\varepsilon)(m-1)n_0k((m-1)n_0)$ points from $S$ in $[1,(m-1)n_0]^2$. $[1,(m-1)n_0]\times[1,mn_0]$ contains $(m-1)n_0$ disjoint vertical lines inside the  square $[1, mn_0]^2$. Consequently, it contains at most $(m-1)n_0k(mn_0)$ points of $S$.
  Therefore, the rectangle $[1,(m-1)n_0]\times[(m-1)n_0+1,mn_0]$ contains at most $$(m-1)n_0k(mn_0)-(1-\varepsilon)(m-1)n_0k((m-1)n_0)$$ points of $S$.  
  
  The rectangle $[1,mn_0]\times[(m-1)n_0+1,mn_0]$ is the disjoint union of $n_0$ horizontal lines inside the square $[1, mn_0]^2$. The other $(m-1)n_0$ horizontal lines contain at most $(m-1)n_0k(mn_0)$ points inside the square $[1, mn_0]^2$. By the definition of $n_0$, the square $[1, mn_0]^2$ contains at least $(1-\varepsilon)mn_0k(mn_0)$ points from $S$. Therefore the rectangle $[1,mn_0]\times[(m-1)n_0+1,mn_0]$ contains at least $(1-m\varepsilon)n_0k(mn_0)$ points from $S$. Together with the previous bound, this implies that $[(m-1)n+1,mn]^2$ contains at least 

  $$   (1-m\varepsilon)n_0k(mn_0)-\big((m-1)n_0k(mn_0)-(1-\varepsilon)(m-1)n_0k((m-1)n_0)\big)=$$ 
 $$=((m-1-(m-1)\varepsilon)k((m-1)n_0)-(m-2+m\varepsilon)k(mn_0))n_0$$
 points of $S$. This bound applies to all squares in the main diagonal of $[1, mn_0]^2$.

The $m$ squares of size $n_0\times n_0$ in the main diagonal of $[1,mn_0]^2$ can be covered by $2n_0-1$ diagonal lines of slope 1, inside the square $[1,mn_0]^2$, which means they contain at most $(2n_0-1)k(mn_0)\le2n_0k(mn_0)$ points from $S$. Comparing this bound with the previous lower bound on these squares yields 
$$\sum_{i=1}^{m}((i-1-(i-1)\varepsilon)k((i-1)n_0)-(i-2+i\varepsilon)k(in_0))n_0\le2n_0k(mn_0)$$
$$\sum_{i=1}^{m-1}(2-2i\varepsilon)k(in_0)-(m-2+m\varepsilon)k(mn_0)\le 2k(mn_0)$$
\begin{equation}
\sum_{i=1}^{m-1}2(1-i\varepsilon)k(in_0)\le m(1+\varepsilon)k(mn_0).
\label{eq:sum_bound}
\end{equation}

In the following we recursively establish an inequality between $k(n_0)$ and $k(5n_0)$.

First apply inequality \ref{eq:sum_bound} for $m=2$:

$$2(1-\varepsilon)k(n_0)\le2(1+\varepsilon)k(2n_0)$$
$$\frac{1-\varepsilon}{1+\varepsilon}k(n_0)\le k(2n_0).$$

Proceeding with $m=3$ yields

$$2(1-\varepsilon)k(n_0)+2(1-2\varepsilon)k(2n_0)\le3(1+\varepsilon)k(3n_0).$$
Using $\frac{1-\varepsilon}{1+\varepsilon}k(n_0)\le k(2n_0)$:
$$\left((1-\varepsilon)+(1-2\varepsilon)\frac{1-\varepsilon}{1+\varepsilon}\right)2k(n_0)\le 3(1+\varepsilon)k(3n_0)$$
$$\frac{2(1-\varepsilon)(2-\varepsilon)}{1+\varepsilon}k(n_0)\le 3(1+\varepsilon)k(3n_0)$$
$$\frac{2(1-\varepsilon)(2-\varepsilon)}{3(1+\varepsilon)^2}k(n_0)\le k(3n_0).$$
Applying inequality \ref{eq:sum_bound} for $m=4$ yields
$$2(1-\varepsilon)k(n_0)+2(1-2\varepsilon)k(2n_0)+2(1-3\varepsilon)k(3n_0)\le4(1+\varepsilon)k(4n_0)$$
$$\left(\frac{2(1-\varepsilon)(2-\varepsilon)}{1+\varepsilon}+2(1-3\varepsilon)\frac{2(1-\varepsilon)(2-\varepsilon)}{3(1+\varepsilon)^2}\right)k(n_0)\le4(1+\varepsilon)k(4n_0)$$
$$\frac{2(1-\varepsilon)(2-\varepsilon)(5-3\varepsilon)}{3(1+\varepsilon)^2}k(n_0)\le 4(1+\varepsilon)k(4n_0)$$
$$\frac{(1-\varepsilon)(2-\varepsilon)(5-3\varepsilon)}{6(1+\varepsilon)^3}k(n_0)\le k(4n_0).$$
For $m=5$ the process yields
$$2(1-\varepsilon)k(n_0)+2(1-2\varepsilon)k(2n_0)+2(1-3\varepsilon)k(3n_0)+2(1-4\varepsilon)k(4n_0)\le5(1+\varepsilon)k(5n_0)$$
$$\left(\frac{2(1-\varepsilon)(2-\varepsilon)(5-3\varepsilon)}{3(1+\varepsilon)^2}+2(1-4\varepsilon)\frac{(1-\varepsilon)(2-\varepsilon)(5-3\varepsilon)}{6(1+\varepsilon)^3}\right)k(n_0)\le 5(1+\varepsilon)k(5n_0)$$
$$\frac{(1-\varepsilon)(2-\varepsilon)(5-3\varepsilon)(3-2\varepsilon)}{15(1+\varepsilon)^4}\le k(5n_0).$$
$\varepsilon=0.1036$ means $Ck(n_0)\le k(6n_0)$, where $C=1.00054>1$ is a constant. We can repeat this argument for every $n\ge n_0$, including every $n=5^an_0$, where $a\in\nn$, this result gives us 
$$C^ak(n_0)\le k(5^an_0)=k(n)$$
$$C^{\log_5\frac{n}{n_0}}k(n_0)=5^{\log_5C\log_5\frac{n}{n_0}}k(n_0)=\left(\frac{n}{n_0}\right)^{\log_5C}k(n_0)\le k(n).$$
Setting $c=\log_5C\approx 0.0003>0$ and the monotonicity of $k(n)$ means $k(n)=\Omega(n^c)$.
    
\end{proof}

We fitted the value of $\varepsilon$ for $m=5$ as it gives the highest possible value, meaning the best bound in our theorem.

This theorem proves that we cannot achieve $\liminf\frac{|S_n|}{k(n) n}>0.897$ for functions such as $k(n)=c\log n$ or when $k(n)$ is constant. Experimental evidence of Nagy, Nagy and Woodroofe \cite{nagy2023extensible} suggests that at the extensible no-three-in-line problem, which is the case of $k(n)=2$, there exists a set $S$ without three collinear points, such that $S\cap[1,n]^2$ is slightly larger than $0.8n$. However, a significant gap remains between these bounds as the computational estimate for $\liminf\frac{|S_n|}{k(n) n}=\liminf\frac{|S_n|}{2 n}$ is approximately $0.4$, while we verified that it is less than $0.897$.

Setting $m>5$ in the previous proof would require a smaller $\varepsilon$ value, because the error terms in the main diagonal would pile up. Setting $\liminf\frac{|S_n|}{k(n) n}=1$ will mean that we can choose $\varepsilon>0$ to be arbitrarily small. In that case, we can repeat the previous proof for fixed $m\in \nn$ by choosing a suitable $\varepsilon>0$ to have insignificant error. In the limit $m\to\infty$ this gives us the following result.

\begin{theorem}
Let $k:\nn^+\to\mathbb{R}^+$ be a monotone increasing function. Suppose there exists a set of grid points $S\subseteq\zz^{2}$, such that $\left|S\cap\ell\cap[1,n]^{2}\right|\le k(n)$ for all lines $\ell$. If $S_n:=S\cap [1,n]^2$ satisfies
$$\liminf_{n\to\infty} \frac{|S_n|}{k(n)\cdot n} = 1,$$
then $k(n) = \Omega( n^c)$, for every constant $c<1$.
\end{theorem}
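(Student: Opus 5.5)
The plan is to rerun the argument of the previous theorem verbatim, but with $m$ an arbitrary fixed integer and with $\varepsilon$ allowed to tend to $0$; the growth exponent should then come out as $1-o(1)$ in $m$. Fix $c<1$ and choose $m$ large enough that the target inequality below yields exponent at least $c$. Because $\liminf_{n\to\infty}\frac{|S_n|}{k(n) n}=1$, for every $\varepsilon>0$ there is $n_0$ with $|S_n|>(1-\varepsilon)nk(n)$ for all $n\ge n_0$. Hence inequality \eqref{eq:sum_bound} holds for every $m'\le m$ and for every starting scale $n\ge n_0$ in place of $n_0$. No part of its derivation used the particular value $\varepsilon=0.1036$, only $|S_n|>(1-\varepsilon)nk(n)$, monotonicity of $k$, and the row, column and diagonal line bounds.

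Next I will solve the recursion in the limit $\varepsilon\to0$. Put $a_i=k(in_0)/k(n_0)$. With $\varepsilon=0$, inequality \eqref{eq:sum_bound} reads $2\sum_{i<j}a_i\le j\,a_j$ for $j=2,\dots,m$. Let $b_j$ be the sequence obtained by taking equality at each step. Then $b_1=1$, and subtracting consecutive equalities gives $(j+1)b_{j+1}=(j+2)b_j$, so
$$b_j=\frac{j+1}{2}.$$
Since the lower bound for $a_j$ is obtained from the lower bounds for $a_1,\dots,a_{j-1}$ with positive coefficients, induction gives $a_j\ge b_j$ in the exact case. For $\varepsilon>0$ the coefficients $2(1-i\varepsilon)$ and $m(1+\varepsilon)$ depend continuously on $\varepsilon$ and stay positive for $\varepsilon<1/m$. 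The same induction therefore gives $a_m\ge b_m(\varepsilon)$, where $b_m(\varepsilon)\to\frac{m+1}{2}$ as $\varepsilon\to0$. Choosing $\varepsilon$ small enough, we get
$$k(mn)\ge C_m k(n)\quad\text{for all } n\ge n_0,\qquad C_m>\frac{m+1}{2}\cdot(1-\delta)$$
for any prescribed $\delta>0$.

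Finally I iterate along $n=m^a n_0$, exactly as in the previous proof. This gives $k(n)\ge (n/n_0)^{\log_m C_m}k(n_0)$, and monotonicity of $k$ extends the bound to all $n$. The exponent is $\log_m C_m\approx\log_m\frac{m+1}{2}=1-\frac{\log 2}{\log m}+o(1)$, which tends to $1$ as $m\to\infty$. So for a given $c<1$, pick $m$ with $\log_m\frac{m+1}{2}>c$ with room to spare, then $\delta$, then $\varepsilon$, then $n_0$.

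The main obstacle is the comparison step: showing rigorously that the lower bounds propagate monotonically through the recursion and that the $\varepsilon$-perturbed solution converges to $b_m$. I would handle it by proving by induction on $j$ the explicit form $a_j\ge \frac{1}{j(1+\varepsilon)}\sum_{i<j}2(1-i\varepsilon)\,b_i(\varepsilon)$, where each $b_i(\varepsilon)$ is a rational function of $\varepsilon$ that is positive near $0$, and then taking the limit $\varepsilon\to0$. A secondary check is that the diagonal covering step, which gives at most $2n_0k(mn_0)$ points on the $m$ diagonal tiles, is unaffected by $m$ being large. It is unaffected, since the $2n_0-1$ diagonals of slope $1$ cover all $m$ diagonal tiles regardless of $m$.
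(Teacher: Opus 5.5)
Your plan is the paper's own argument. You rerun the row/column/diagonal tile estimate for an arbitrary fixed $m$, let $\varepsilon\to0$, solve the recursion \eqref{eq:sum_bound}, and iterate along $n=m^a n_0$. Your continuity argument for the perturbation is a tidier version of the paper's bookkeeping with factors $\frac{1+\varepsilon p(\varepsilon)}{1+\varepsilon q(\varepsilon)}$: the coefficients $2(1-i\varepsilon)$ stay positive for $\varepsilon<1/(m-1)$, and each $b_j(\varepsilon)$ is a rational function continuous at $0$.

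There is, however, an arithmetic slip in your closed form for the recursion. The equality $j b_j=2\sum_{i<j}b_i$ is imposed only for $j\ge2$. For $j=1$ it would read $b_1=0$; $b_1=1$ is an initial condition. So the differenced relation $(j+1)b_{j+1}=(j+2)b_j$ holds only for $j\ge2$, and it starts from $b_2=1$ rather than $\frac32$. Indeed, the case $m=2$ only gives $k(2n_0)\ge\frac{1-\varepsilon}{1+\varepsilon}k(n_0)$.

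Hence $b_j=\frac{j+1}{3}$ for $j\ge2$ (e.g.\ $b_3=\frac43$, $b_4=\frac53$), which is exactly the paper's bound $k(mn_0)\ge\frac{m+1}{3}k(n_0)$. Moreover, $(1,1,\frac43,\frac53,\dots)$ satisfies every inequality $2\sum_{i<j}a_i\le j a_j$ with equality. So your induction cannot produce $C_m>\frac{m+1}{2}(1-\delta)$ as claimed.

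The slip does not hurt the theorem. With $C_m$ close to $\frac{m+1}{3}$, which exceeds $1$ once $m\ge3$, the exponent becomes $\log_m C_m=1-\frac{\log 3}{\log m}+o(1)$. This still tends to $1$, and it matches the paper's exponent $1-\log_m 3$.
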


\begin{proof}

We fix $m\in\nn$ and choose an $\varepsilon>0$, which we will specify later. We can find an $n_0$ that guarantees for every $n \ge n_0: \frac{|S \cap [1,n]^2|}{n k(n)} >1-\varepsilon$ and pick such an $n_0$, that satisfies $k(n_0) > 0$.  We subdivide the square $[1, mn_0]^2$ into $m\times m$ squares of size $n_0\times n_0$. As verified in the previous proof, 
$$\sum_{i=1}^{m-1}(2-2i\varepsilon)k(in_0)\le (m+m\varepsilon)k(mn_0).$$
The same formula holds for any $0<m'<m$.
Applying this recursion, we show that $$k(mn_0)\ge\frac{m+1}{3}k(n_0)\frac{1+\varepsilon p_m(\varepsilon)}{1+\varepsilon q_m(\varepsilon)},$$ where $p_m(x),q_m(x)$  are polynomials with real coefficients. We will use this to prove formally that we can choose $\varepsilon>0$ small enough, such that $\frac{m+1}{3}\frac{1+\varepsilon p_1(\varepsilon)}{1+\varepsilon q_1(\varepsilon)}> \frac{m+1}{3}$. Notice that $\exists p_3(x),q_3(x)\in \mathbb{R}[x]$ 
$$a\frac{1+\varepsilon p_1(\varepsilon)}{1+\varepsilon q_1(\varepsilon)}+b\frac{1+\varepsilon p_2(\varepsilon)}{1+\varepsilon q_2(\varepsilon)}=(a+b)\frac{1+\varepsilon p_3(\varepsilon)}{1+\varepsilon q_3(\varepsilon)},$$
 where $p_1(x),q_1(x),p_2(x),q_2(x)\in \mathbb{R}[x]$ arbitrary polynomials and $a+b\ne0$. Similarly  $\exists p_3(x),q_3(x)\in \mathbb{R}[x]$ 
 $$a\frac{1+\varepsilon p_1(\varepsilon)}{1+\varepsilon q_1(\varepsilon)}\cdot b\frac{1+\varepsilon p_2(\varepsilon)}{1+\varepsilon q_2(\varepsilon)}=ab\cdot\frac{1+\varepsilon p_3(\varepsilon)}{1+\varepsilon q_3(\varepsilon)},$$ 
 where $p_1(x),q_1(x),p_2(x),q_2(x)\in \mathbb{R}[x]$ arbitrary polynomials. Similarly  $\exists p_3(x),q_3(x)\in \mathbb{R}[x]$ 
 $$a\frac{1+\varepsilon p_1(\varepsilon)}{1+\varepsilon q_1(\varepsilon)}/\left(b\frac{1+\varepsilon p_2(\varepsilon)}{1+\varepsilon q_2(\varepsilon)}\right)=\frac ab\cdot \frac{1+\varepsilon p_3(\varepsilon)}{1+\varepsilon q_3(\varepsilon)},$$ 
 where $p_1(x),q_1(x),p_2(x),q_2(x)\in \mathbb{R}[x]$ arbitrary polynomials and $b\ne0$; $a,b\in \mathbb{R}$ constants in every expression.

 These properties mean that solving the recursion without $\varepsilon$-s and not adding constants that sum to $0$ is enough to prove the recursion.

 It is enough to prove $\sum_{i=1}^{m-1}2k(in_0)\le mk(mn_0)\Rightarrow k(mn_0)\ge\frac{m+1}{3}k(n_0)$. We prove this by induction, if every $i<m$ satisfies this condition, we have 
 $$mk(mn_0)\ge\sum_{i=1}^{m-1}2k(in_0)\ge 2k(n_0)\left(1+\sum_{i=2}^{m-1}\frac{m+1}{3}\right)=2k(n_0)\left(1+\frac{(m-2)(m+3)}{6}\right)$$
 $$mk(mn_0)\ge\frac{m^2+m}{3}k(n_0)$$
 $$k(mn_0)\ge\frac{m+1}{3}k(n_0).$$
 $k(n_0)>0$, thus we never summed constants adding up to $0$ and this proof works with multiplying every constant with a suitable $\frac{1+\varepsilon p(\varepsilon)}{1+\varepsilon q(\varepsilon)}$, where $p(x),q(x)\in \mathbb{R}[x]$ are polynomials.

 This proves $k(mn_0)\ge\frac{m+1}{3}k(n_0)\frac{1+\varepsilon p_m(\varepsilon)}{1+\varepsilon q_m(\varepsilon)}$ and we can choose $\varepsilon>0$ to be small enough that $k(mn_0)\ge\frac{m+1}{3}k(n_0)\frac{1+\varepsilon p_m(\varepsilon)}{1+\varepsilon q_m(\varepsilon)}\ge\frac{m}{3}k(n_0)$.

 We have chosen $n_0$, such that we can repeat this argument for every $n\ge n_0$, including every $n=m^an_0$, where $a\in\nn$, this result gives us 
$$k(n)= k(m^an_0)\ge\frac{m^a}{3^a}k(n_0)=\frac{n/n_0}{3^a}k(n_0)$$
$$k(n)\ge\frac{n}{n_0}3^{-log_m(n/n_0)}k(n_0)=\frac{n}{n_0}m^{-\log_m(n/n_0)\log_m3}k(n_0)=\left(\frac{n}{n_0}\right)^{1-\log_m3}k(n_0).$$
We can choose $m\in  \nn$ as large as we want it to be, making $\log_m3>0$, arbitrarily close to 0, thus $k(n)=\Omega(n^c)$, where $c$ can be any constant smaller than $1$.

\end{proof}

The above theorem proves that there is no $S$ with $\liminf\frac{|S\cap[1,n]^2|}{k(n) n}=1$ for example for functions $k(n)=c n^{\alpha}$, where $\alpha<1$. In the following, we provide further estimations that are applicable for $k(n)= O(n)$. We will use these bounds to prove that for any constant $c\in[0,1]$ there exists $k(n)$ and $S$ such that $\liminf \frac{|S \cap [1,n]^2|}{n k(n)}=c$ and this is the highest possible for this $k(n)$.

\begin{lemma}\label{upperboundlemma}
For any given no-$(k(n)+1)$-in-line set $S$:
$$\liminf_{n\to\infty} \frac{|S \cap [1,n]^2|}{n k(n)} \leq \liminf_{n\to\infty} \frac{k(n)}{k(n+1)}.$$
\end{lemma}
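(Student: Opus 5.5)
The plan is to compare the square $[1,n+1]^2$ with $[1,n]^2$ directly. The only information we use about $S$ is the line condition applied to vertical and horizontal lines, as in the trivial upper bound $|S\cap[1,n]^2|\le nk(n)$. Throughout we may assume $k(n)>0$ for all large $n$. Otherwise $k\equiv 0$ by monotonicity, $S$ is empty, and the density is not meaningful. The statement only concerns the tail, so the finitely many $n$ with $k(n)=0$ can be ignored.

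\textbf{Step 1: a one-step bound.} Write $S_n=S\cap[1,n]^2$. We have the decomposition
$$S_{n+1}\subseteq S_n\cup\bigl(S\cap([1,n+1]\times\{n+1\})\bigr)\cup\bigl(S\cap(\{n+1\}\times[1,n+1])\bigr).$$
The first set splits into $n$ vertical lines inside $[1,n]^2$, so it has at most $nk(n)$ points. The last row and the last column are each a single line inside $[1,n+1]^2$, so each contains at most $k(n+1)$ points of $S$. Hence
$$|S_{n+1}|\le nk(n)+2k(n+1).$$

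\textbf{Step 2: normalise.} Dividing by $(n+1)k(n+1)$ gives
$$\frac{|S_{n+1}|}{(n+1)k(n+1)}\le \frac{n}{n+1}\cdot\frac{k(n)}{k(n+1)}+\frac{2}{n+1}.$$
Let $L=\liminf_{n\to\infty} k(n)/k(n+1)$; note $L\le 1$ by monotonicity. Choose a subsequence $n_j\to\infty$ with $k(n_j)/k(n_j+1)\to L$. Along the indices $n_j+1$, the right-hand side tends to $L$, because $n/(n+1)\to1$ and $2/(n+1)\to0$.

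\textbf{Step 3: conclude.} The liminf of $|S_N|/(Nk(N))$ over all $N$ is at most its limit inferior along the subsequence $N=n_j+1$. That in turn is at most $\lim_j\bigl(\tfrac{n_j}{n_j+1}\tfrac{k(n_j)}{k(n_j+1)}+\tfrac{2}{n_j+1}\bigr)=L$, which proves the lemma.

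There is no real obstacle here. The one point needing care is to take the liminf of the right-hand side along a subsequence realising $L$, rather than trying to bound the full limit. One must also check that the edge terms $2k(n+1)$ are negligible after dividing by $(n+1)k(n+1)$, which holds as long as $k(n+1)>0$.
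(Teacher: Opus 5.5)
Your proof is correct and follows the paper's argument. You use the same bound $|S\cap[1,n+1]^2|\le nk(n)+2k(n+1)$, obtained by splitting off the last row and column, and then normalise by $(n+1)k(n+1)$. Your subsequence argument in Step 3 is just an explicit version of the paper's one-line use of monotonicity of $\liminf$, together with $\frac{n}{n+1}\frac{k(n)}{k(n+1)}+\frac{2}{n+1}=\frac{k(n)}{k(n+1)}+o(1)$.
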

\begin{proof}
For $n>1$ consider the $(n+1)$th row and column separately from the rest of the points of $[1,n+1]^2$. In the first $[1, n]^2$ there are at most $nk(n)$ points. In the $(n+1)$th row and column there are at most $k(n+1)$ points each. So altogether there are at most $nk(n)+2k(n+1)$ points in $[1,n+1]^2$. This means the ratio
$$\liminf_{n\to\infty} \frac{|S \cap [1,n]^2|}{n k(n)}=\liminf_{n\to\infty} \frac{|S \cap [1,n+1]^2|}{(n+1) k(n+1)} \leq\liminf_{n\to\infty} \frac{nk(n)+2k(n+1)}{(n+1)k(n+1)}=\liminf_{n\to\infty} \frac{k(n)}{k(n+1)}.$$
\end{proof}
This is a special case of the following lemma ($f(n)=1$ case).
\begin{lemma}
For any given no-$(k(n)+1)$-in-line set $S$:
$$\liminf_{n\to\infty} \frac{|S \cap [1,n]^2|}{n k(n)} \leq \liminf_{n\to\infty} \left(\frac{k(n)}{(1+\frac{f(n)}{n})k(n+f(n))}+\frac{2f(n)}{n+f(n)}\right).$$
If $f(n)=o(n)$, this becomes $$\liminf_{n\to\infty} \frac{|S \cap [1,n]^2|}{n k(n)} \leq \liminf_{n\to\infty} \frac{k(n)}{k(n+f(n))}.$$
If $f(n)=(c-1)n$ for $c\in (1, 2)$:
$$\liminf_{n\to\infty} \frac{|S \cap [1,n]^2|}{n k(n)} \leq \liminf_{n\to\infty} \left(\frac{k(n)}{ck(cn)}+\frac{2(c-1)}{c}\right).$$
\end{lemma}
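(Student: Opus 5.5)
We follow the proof of Lemma~\ref{upperboundlemma}, but replace the single extra row and column by a strip of width $f(n)$. Write $N=n+f(n)$ and decompose
\[
[1,N]^2=[1,n]^2\ \cup\ \bigl([n+1,N]\times[1,N]\bigr)\ \cup\ \bigl([1,n]\times[n+1,N]\bigr).
\]
The first piece contains at most $nk(n)$ points of $S$, since it is covered by $n$ vertical lines, each carrying at most $k(n)$ points inside $[1,n]^2$. The vertical strip $[n+1,N]\times[1,N]$ consists of $f(n)$ columns. Each column is a vertical line with at most $k(N)$ points inside $[1,N]^2$, so the strip contributes at most $f(n)k(N)$ points. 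By the same argument applied to rows, the horizontal strip $[1,n]\times[n+1,N]$ also contributes at most $f(n)k(N)$ points. Altogether,
\[
|S\cap[1,N]^2|\le nk(n)+2f(n)k(N).
\]

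Dividing by $Nk(N)$ gives
\[
\frac{|S\cap[1,N]^2|}{Nk(N)}\le \frac{nk(n)}{(n+f(n))k(n+f(n))}+\frac{2f(n)}{n+f(n)}=\frac{k(n)}{(1+\frac{f(n)}{n})k(n+f(n))}+\frac{2f(n)}{n+f(n)}.
\]
Next we pass to the $\liminf$. Assume that $f(n)\ge 0$ and that $n\mapsto n+f(n)$ tends to infinity, which holds in every case of interest. Then the sequence $a_N=|S\cap[1,N]^2|/(Nk(N))$ evaluated along $N=n+f(n)$ is a subsequence of $(a_N)$ up to repetitions. Hence $\liminf_N a_N\le\liminf_n a_{n+f(n)}$, and the first inequality follows.

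This subsequence step is the only delicate point. If $n+f(n)$ is not injective, the values still run through indices tending to infinity, and $\liminf$ over any family of indices tending to infinity is at least $\liminf_N a_N$. If $f$ is not integer-valued, we replace $f(n)$ by $\lfloor f(n)\rfloor$. This changes neither strip bound nor, in the two special cases, the limiting expressions, since $k$ is monotone and the error is $O(1/n)$.

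The two special cases are immediate. If $f(n)=o(n)$, then $\frac{2f(n)}{n+f(n)}\to0$ and $1+\frac{f(n)}{n}\to1$, so these terms drop out of the $\liminf$. If $f(n)=(c-1)n$, then $1+\frac{f(n)}{n}=c$, $n+f(n)=cn$ and $\frac{2f(n)}{n+f(n)}=\frac{2(c-1)}{c}$, which gives the stated bound. In this case we should use $\lfloor cn\rfloor$, and monotonicity of $k$ handles the rounding.
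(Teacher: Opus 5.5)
Your proof is correct and is the same argument as the paper's: bound $[1,n]^2$ by $nk(n)$ and each of the $f(n)$ extra rows and columns by $k(n+f(n))$, divide by $(n+f(n))k(n+f(n))$, and pass to the $\liminf$ along $N=n+f(n)\to\infty$; you justify the subsequence step more explicitly than the paper does. One small point on rounding: reading $k(cn)$ as $k(\lfloor cn\rfloor)$ makes your floor choice fine, since $k$ is defined on $\mathbb{N}^+$, but if $k$ were a monotone function of a real variable you would need to round up to $N=\lceil cn\rceil$, because only $k(\lceil cn\rceil)\ge k(cn)$ makes monotonicity push the inequality in the right direction.
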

\begin{proof}
    In the first $[1, n]^2$ there are at most $nk(n)$ points. In the next $f(n)$ rows and columns, there are at most $k(n+f(n))$ points each. The $\liminf$ at points $n+f(n)$ gives:
    $$\liminf_{n\to\infty} \frac{|S \cap [1,n]^2|}{n k(n)}
    \leq \liminf_{n\to\infty} \frac{|S \cap [1,n+f(n)]^2|}{(n+f(n))(k(n+f(n))}\le $$ $$\le\liminf_{n\to\infty}{\frac{nk(n)+2f(n)k(n+f(n))}{(n+f(n))k(n+f(n))}}
    = \liminf_{n\to\infty} \left(\frac{k(n)}{(1+\frac{f(n)}{n})k(n+f(n))}+\frac{2f(n)}{n+f(n)}\right)$$
\end{proof}

\begin{theorem}\label{theorem35}
    For any constant $c\in[0,1]$ we can construct $S$ and $k(n)\le n$ such that $\liminf \frac{|S \cap [1,n]^2|}{n k(n)}=c$ and this is the highest possible value of the $\liminf\frac{|S \cap [1,n]^2|}{n k(n)}$ for this $k(n)$.
\end{theorem}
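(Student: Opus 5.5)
We will combine Lemma~\ref{lowerboundlemma} (lower bound $\liminf k(n)/n$) with the multiplicative form of the upper bound lemma (the case $f(n)=(c'-1)n$). The task is to choose $k(n)$ so that both bounds equal $c$.

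For $c=1$, take $k(n)=n$. Then Lemma~\ref{lowerboundlemma} gives density at least $1$, and the trivial bound $|S\cap[1,n]^2|\le nk(n)$ gives at most $1$.

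For $c=0$, take any $k(n)\le n$ whose every valid configuration has density $0$. A natural candidate is a very rapidly oscillating-in-growth $k$. Concretely, let $k$ be a step function that is constant on long blocks $[N_j,2N_j)$ and then jumps by a huge factor at $2N_j$. Then $\liminf k(n)/(2k(2n))$ is $0$ along $n=N_j$, so the lemma with $c'=2$ yields density at most $0+\tfrac12$. This is not yet $0$. I would therefore use the first form of the lemma with $f(n)$ chosen so that $f(n)=o(n)$ along the jump points, and let $k$ jump by an unbounded factor just after $N_j$ (for example $k(n)=N_j$ on $[N_j,N_j+\sqrt{N_j})$ and then $k=n$, padded so that $k$ stays monotone and $\le n$). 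Along $n=N_j$, $f(n)=\sqrt{N_j}$, we get $\frac{k(n)}{k(n+f(n))}\to$ a small limit, giving density $0$.

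For $0<c<1$, the same mechanism applies. Take $k(n)=cn$ except at a sparse sequence of points where $k$ is briefly held constant and then allowed to catch up. The lower bound from Lemma~\ref{lowerboundlemma} is then $\liminf k(n)/n$, which we want to equal $c$. The upper bound must come from the lemma as $\liminf \frac{k(n)}{k(n+f(n))}$ with $f=o(n)$, and we want this to equal $c$ as well. So the design is as follows: along a sparse sequence $N_j$, let $k(n)=c N_j$ on $[N_j, N_j+o(N_j)]$, then jump so that $k(N_j+f(N_j))=N_j+f(N_j)\approx N_j$, and slowly decrease the ratio $k(n)/n$ from $1$ back toward $c$ before the next $N_{j+1}$, keeping $k$ monotone and $\le n$.

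Then $\liminf k(n)/n=c$, attained at the points $N_j$, and $\liminf k(n)/k(n+f(n))\le cN_j/N_j=c$. The lemmas thus give the matching bounds $c\le\liminf\le c$. Since the upper bound holds for every valid $S$, the value $c$ is optimal for this $k$.

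The main obstacle is checking that Lemma~\ref{lowerboundlemma}'s construction, whose proof chooses rationals $c_k\uparrow\liminf k(n)/n$ and sufficiently sparse $s_k$, really tolerates a $k(n)$ that fluctuates between $cn$ and $n$. We need $k(n)\ge c_k n$ for all $n>s_{k-1}$, which holds since $k(n)/n\ge c$ everywhere once we ensure $k(n)\ge cn$ throughout. The jump must also be compatible with monotonicity, and $k(n)$ is allowed to be real-valued. I would check these carefully rather than the routine limit computations.
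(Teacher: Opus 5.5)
Your cases $c=1$ and $0<c<1$ are correct and follow the paper's route. The lower bound comes from Lemma~\ref{lowerboundlemma} via $\liminf k(n)/n=c$, and the matching upper bound comes from the jump lemma, applied where $k$ jumps by a factor of about $1/c$. The paper uses the pure step function $k(n)=\lfloor (1/c)^m\rfloor$ on $[(1/c)^m,(1/c)^{m+1})$ together with $f\equiv 1$ (Lemma~\ref{upperboundlemma}). Your piecewise $k$ uses the same mechanism. Two simplifications are available to you:
\begin{itemize}
\item The plateau of length $o(N_j)$ is superfluous. Jumping from $cN_j$ straight to $N_j+1$ and taking $f\equiv 1$ already gives $k(N_j)/k(N_j+1)\to c$.
\item You do not need $k(n)\ge cn$ everywhere, since Lemma~\ref{lowerboundlemma} only uses $\liminf k(n)/n$. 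Your plateau in fact violates $k(n)\ge cn$, but harmlessly.
\end{itemize}

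The case $c=0$, however, fails as written. With $k(n)=N_j$ on $[N_j,N_j+\sqrt{N_j})$ and $k(n)=n$ afterwards, there is no jump at all: $k(N_j)/k(N_j+\sqrt{N_j})=N_j/(N_j+\sqrt{N_j})\to 1$. The lemma then returns only the trivial bound $1$. Your discarded $c'=2$ attempt is also miscomputed: there $\frac{2(c'-1)}{c'}=1$, not $\tfrac12$.

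Because $k(n)\le n$, a jump by an unbounded factor at $N$ forces $k(N)=o(N)$. So $k$ must be held constant over a long stretch $[N_j,N_{j+1})$ with $N_{j+1}/N_j\to\infty$, not over a short one. For instance, take $k(n)=N_j$ on $[N_j,N_{j+1})$ with $N_j=j!$, which is the paper's choice. This gives $k(N_{j+1}-1)/k(N_{j+1})=1/(j+1)\to 0$, so Lemma~\ref{upperboundlemma} yields density $0$ for every valid $S$. The lower bound $0$ is trivial.
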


\begin{proof}
We have seen that linear $k(n)$ are examples for $\displaystyle\liminf_{n\to\infty} \frac{|S \cap [1,n]^2|}{n k(n)}=1$.

The case of $\displaystyle\liminf_{n\to\infty} \frac{|S \cap [1,n]^2|}{n k(n)}=0$ is achieved by $k(n)=m!$ if $m!\le n<(m+1)! $, where $m\in\nn$. This means $\frac{k(m!-1)}{k(m!)}=\frac{(m-1)!}{m!}=\frac1m$, which goes to $0$ as $m\to\infty$. Lemma~\ref{upperboundlemma} proves that $\displaystyle\liminf_{n\to\infty} \frac{|S \cap [1,n]^2|}{n k(n)}=0$.

For any given $c \in (0,1)$, consider the function $k(n) = \lfloor \left(\frac{1}{c}\right)^m\rfloor $ where $n \in\left[\left(\frac{1}{c}\right)^m, (\frac{1}{c})^{m+1} \right)$ for the suitable $m \in \nn^+$. This means that there are jumps in the function at the boundary of these intervals when $\frac{k(n)}{k(n+1)}\approx\frac{(1/c)^m}{(1/c)^{m+1}}=c$ and Lemma~\ref{upperboundlemma} proves that $\displaystyle\liminf_{n\to\infty} \frac{|S \cap [1,n]^2|}{n k(n)}\le c$. Similarly $\displaystyle\liminf_{n\to\infty} \frac{k(n)}{n}\ge c$ and by Lemma~\ref{lowerboundlemma}, this proves $\displaystyle\liminf_{n\to\infty} \frac{|S \cap [1,n]^2|}{n k(n)}\ge c$. The two bounds together give $\displaystyle\liminf_{n\to\infty} \frac{|S \cap [1,n]^2|}{n k(n)}=c$.
\end{proof}
\section{Extending the solution for the extensible no-3-in-Line problem to sublinear limiters}

In this section, we will prove that if there exists a point-set $S$ that has a positive density for the no-3-in-line problem, then for every sufficiently regular function $k(n)$ including $k(n)=cn^\alpha$ and $k(n)=c\log n$, there exists a point set $S_{k(n)}$ that has a positive density for the no-$k(n)+1$-in-line problem.

First, we define a family of functions whose members have suitable properties to be used as extensible limiter functions and have not been discussed in the previous chapters.

\begin{definition}[Sublinear limiter] \label{def:sublinear}
We call $k(n)$ a sublinear limiter function if it satisfies the following properties:
\begin{enumerate}[label=(\arabic*)]
    \item $k(n) = o(n)$ \label{sublin2}
    
    \item $k$ is unbounded \label{sublin3}
    
    \item $k$ is monotonically increasing \label{sublin4}

    \item $\displaystyle\liminf_{n \to \infty} \frac{\sum_{i=1}^{n} 2k(i)}{ n k(n)} \geq 1$ \label{sublin1}

    \item $\displaystyle\liminf_{n \to \infty} \frac{k(\lfloor \lambda n\rfloor)}{k(n)} \geq \lambda$ holds for all $\lambda \in [0,1]$ \label{sublin5}
    
\end{enumerate}
\end{definition}

\begin{remark} These properties arise naturally when defining sublinear limiter functions, this remark explains shortly why each point of the definition is relevant.
\begin{enumerate}
    \item[\ref{sublin2}] 
    The case $k(n) = cn$ was fully discussed in Section~\ref{positive_density_sets} and that construction gives a positive density solution for every $k(n)=\Theta(n)$. We only consider functions with $k(n)=o(n)$ in this section because our goal is a positive density set for $k(n)$.
    
    \item[\ref{sublin4}] Given a function $k(n): \mathbb{N}^+\to \mathbb{N^+}$, one can construct a monotone $k'(n) = \inf_{i \geq n} {k(i)}$ limiter function. Indeed, if the maximum number of points allowed on a line in $[1,m]^2$ is smaller than in $[1,n]^2$ for some $m>n$, then $k(m)$ is an indirect upper bound on the possible number of points on a line in $[1,n]^2$ as well.

    \item[\ref{sublin3}] If $k(n)$ is bounded by a constant then the problem is essentially the same as $k(n)=k$. A bounded, monotone natural valued function is constant from some point $n_0$ and the studied $\liminf \frac{|S\cap [1,n]^2|}{nk(n)}$ will not be different if we delete the points from $[1,n_0]^2$ as we decreased the numerator by a constant and the denominator goes to infinity. We will construct a positive density set for $k(n)$ from a positive density set for $k(n)=3$ which is also a positive density set for $k(n)=k$. As a consequence, we only consider unbounded functions in this section.

    \item[\ref{sublin1}] If the left hand side is smaller than 1, we immediately get an upper bound for the largest possible $\liminf$, because the numerator on the left hand side counts the maximal possible number of points in the $[1,i]^2 \setminus [1,i-1]^2 $ regions, which add up to the number of total allowed points in $[1,n]^2$. Note that for the proof of Theorem \ref{thm:ulb} the positivity of the left hand side is sufficient.

    \item[\ref{sublin5}] We exclude functions with large jumps including those constructed in the proof of Theorem~\ref{theorem35}. 
\end{enumerate}
\end{remark}

In the following theorem we will establish a direct connection between the maximal achievable densities for the no-3-in-line problem and for the no-$k(n)+1$-in-line problem when $k$ is a sublinear limiter.

\begin{theorem}[Universal lower bound] \label{thm:ulb}
Let $L\subseteq \zz^2$ be a set of lattice points such that it contains no three collinear points. Suppose $$\liminf \frac{| L \cap [1,n]^2|}{2n} = H>0.$$ Then for any sublinear limiter $k(n)$, there exists a set of lattice points $S\subseteq \zz^2$ having no $k(n)+1$ collinear points in $[1,n]^2$, with $$\liminf \frac{|S\cap [1,n]^2|}{nk(n)} \geq H^2/4.$$
\end{theorem}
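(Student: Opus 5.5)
We build $S$ by a blow-up of $L$: we take a "scaled, blocky" copy of $L$ and place a small no-three-in-line configuration inside each block. The block sizes grow with $k$. The target bound $H^2/4$ suggests two independent losses of $H/2$ each. One comes from the density of the outer pattern, the other from the density of the inner pattern, each being a copy of (a piece of) $L$.

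\textbf{Construction.} Partition $\nn^+\times\nn^+$ into square blocks arranged along a grid of growing sizes. The block in "block-row" $i$ and "block-column" $j$ has side $b_i$ horizontally and $b_j$ vertically. We choose $b_t$ slowly increasing, say $b_t\approx k(B_t)$ where $B_t=\sum_{s\le t}b_s$ is the position of the $t$-th grid line. A block $(i,j)$ is \emph{active} iff $(i,j)\in L$. Inside each active block we place a copy of $L\cap[1,b]^2$, with $b=\min(b_i,b_j)$ suitably truncated, so the points in a block are in general position. The copy contains roughly $2Hb$ points.

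\textbf{Counting.} Inside $[1,n]^2$ with $n\approx B_T$, the active blocks number about $2H\cdot T$ up to the $\liminf$. Using property \ref{sublin5} and monotonicity, the blocks near the end have side comparable to $k(n)$. Summing gives $|S\cap[1,n]^2|\gtrsim \sum_{(i,j)\in L,\,i,j\le T} 2H\min(b_i,b_j)$. Property \ref{sublin1}, $\sum 2k(i)\ge (1-o(1))nk(n)$, is exactly what converts sums of $b_t$ over active blocks into a quantity of order $nk(n)$. The $\liminf$ of $L$'s density applied on the block level yields the first factor $H/2$, and the inner copies yield the second. I expect the constants to come out to $H^2/4$ after using \ref{sublin5} with $\lambda$ close to $1$ to compare $k$ at nearby scales.

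\textbf{Collinearity.} Fix a line $\ell$ and $n$. Since blocks of $L$ contain no three collinear points, $\ell$ meets at most $2$ points inside any single block. The main step is to bound how many active blocks $\ell$ can meet with $\ge1$ point, by about $k(n)/2$. Here I would argue as follows. If $\ell$ passes through many active blocks, then, since the block centers sit on a (distorted) copy of $L$, a line through three active blocks nearly corresponds to three nearly collinear points of $L$. This is not forbidden, so this is where it breaks. Instead I would choose the block sizes so that the number of blocks crossed along any line in $[1,n]^2$ is itself $O(T)$. I would then pick $T$, the number of block-rows up to $n$, to be at most $k(n)/4$, i.e. blocks of side $\approx 4n/k(n)$, and put the inner copy of $L$ at scale $b\approx n/k(n)$. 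The counts then become: per block $2$ points, and at most $2T\le k(n)$ blocks per line. The density is $H\cdot 2T$ active blocks per block-row, times $2H b$ points per block, times $T$ rows, giving $\approx 4H^2 T^2 b\cdot$ const $\sim H^2 nk(n)/4$ after fixing constants. So I will take blocks of side $\approx 4n/k(n)$ rather than $k(n)$; sublinearity \ref{sublin2} ensures the blocks grow, and \ref{sublin3} ensures $T\to\infty$ so that $L$'s $\liminf$ applies.

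\textbf{Main obstacle.} The principal difficulty is extensibility: block sizes must be fixed once and for all, yet valid for every $n$ simultaneously. The natural choice is $b_t$ defined by $B_t$ with $b_t\approx 4B_t/k(B_t)$, using \ref{sublin5} so that $k$ does not jump between consecutive grid lines. Then the bound "$\ell$ crosses at most $2T(n)$ blocks, $T(n)\le k(n)/4$" must hold for partial blocks at the boundary, and the density count must hold for $n$ between grid lines. Handling this interpolation, with \ref{sublin1} and \ref{sublin5}, is what I expect to consume most of the work.
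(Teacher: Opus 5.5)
\textbf{There is a genuine gap: your blow-up construction cannot reach positive density, however the block sizes are chosen.}

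Take any horizontal line. The active blocks it meets all share one block coordinate, so they correspond to points of $L$ on a single axis-parallel line, and there are at most two of them. Inside each active block, the copy of $L\cap[1,b]^2$ again has at most two points on that line. Hence every horizontal line carries at most $4$ points of $S$. This gives $|S\cap[1,n]^2|\le 4n$, and so $|S\cap[1,n]^2|/(nk(n))\le 4/k(n)\to 0$ by property \ref{sublin3}.

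The slip is in your final count. Earlier you correctly say there are about $2HT$ active blocks \emph{in total}. You then use $2HT$ active blocks \emph{per block-row} and multiply by $T$ rows, overcounting by a factor $T\approx k(n)/4$. The correct count is about $2HT\cdot 2Hb=O(Tb)=O(n)$. A blow-up of one linear-size pattern by another is still linear-size, whereas you need $\Theta(nk(n))$ points.

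The collinearity side is not in place either. The choice $b_t\approx 4B_t/k(B_t)$ gives about $\int_1^n \frac{k(x)}{4x}\,dx$ block-rows below $n$. For $k(n)=n^\alpha$ this is $n^\alpha/(4\alpha)>k(n)/4$, violating the bound $T(n)\le k(n)/4$ you rely on. Even if you activate every block, a fixed grid with at most $k(n)/4$ block-rows below every $n$ forces blocks to grow so fast for a limiter like $\log n$ that the $\liminf$ is again $0$. The paper records the same limitation for its tower construction.

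\textbf{The missing idea is superposition instead of blow-up.} The paper takes vertically shifted copies $L_{i,j}=(L+v_{i,j})\setminus[1,i-1]^2$ of the whole set $L$. At step $i$ it introduces $x_i=\lfloor k(i)/2\rfloor-\lfloor k(i-1)/2\rfloor$ new copies, with distinct shifts $v_{i,j}=(0,j+\sum_{l<i}x_l)$.

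\begin{itemize}
\item Copies introduced after step $n$ are empty in $[1,n]^2$. So only $\lfloor k(n)/2\rfloor$ copies meet $[1,n]^2$, each contributing at most two points to any line. The no-$(k(n)+1)$-in-line property and extensibility therefore come for free, with no block-crossing geometry at all.
\item Because the shifts are vertical, a point lies in at most two copies; otherwise $L$ would have three points on a vertical line. This costs a factor $\frac12$.
\item The count in a square of side about $n+k(n)/2$ is then at least $\frac12\sum_{i\le n}\lfloor k(i)/2\rfloor(h_i-h_{i-1})$, where $h_i=|L\cap[1,i]^2|$.
\item The worst admissible profile has $h$ growing with slope $2$ up to about $Hn$ and then flat. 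Properties \ref{sublin1} and \ref{sublin5} turn this into $\frac H4\cdot\frac{k(\lfloor Hn\rfloor)}{k(n)}\ge\frac{H^2}{4}$.
\end{itemize}

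So the two factors of $H$ come from the length of the worst-case profile and from property \ref{sublin5} with $\lambda\approx H$. They do not come from an outer and an inner copy of $L$.
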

\begin{proof}
For a given $k$, we construct a set $S$ that contains at most $ k(n)$ points in $[1,n]^2\cap \ell$ for every line $\ell$. 
Let $x_n = \lfloor \frac{k(n)}{2} \rfloor - \lfloor \frac{k(n-1)}{2} \rfloor$ for all $n\in \nn^+$ and let $L_{i,j}$ (for all $1\le i < \infty, 0\le j < x_i$) consist of the points of $L$ shifted by the vector $$v_{i, j}=(0, j+ \sum_{l = 1}^{i - 1}x_l)$$ with no point lying in $[1, i-1]^2$;
$$L_{i, j}:= (L + v_{i, j})\setminus[1, i-1]^2.$$
Let $S$ be the union of these shifted copies of $L$; 
$$S=\bigcup_{\substack{1 \le i  \\ 0 \le j < x_i}} L_{i,j}.$$

\begin{figure}[h!!]
\centering
\begin{tikzpicture}[scale=0.3]

  \draw[step=1cm,lightgray,very thin] (0,0) grid (29,33);

  \draw[->,thick] (0,0) -- (30,0) node[right] {$x$};
  \draw[->,thick] (0,0) -- (0,34) node[above] {$y$};

  \draw[red, thick, dashed] (0,0) rectangle (10,10);

  \node[red] at (5,5.5) {\small\textbf{Deleted region}};

  \foreach \x in {0,...,28} {
    \pgfmathsetmacro\y{mod(\x*\x,29)}
    \pgfmathsetmacro\xnew{28 - \y}
    \pgfmathsetmacro\ynew{\x}
    \ifdim\xnew pt>10pt \filldraw[blue] (\xnew,\ynew) circle (0.3);
    \else\ifdim\ynew pt>10pt \filldraw[blue] (\xnew,\ynew) circle (0.3);\fi\fi
  }

  \foreach \x in {0,...,28} {
    \pgfmathsetmacro\y{mod(\x*\x,29)}
    \pgfmathsetmacro\xnew{28 - \y}
    \pgfmathsetmacro\ynew{\x + 1}
    \ifdim\xnew pt>10pt \filldraw[red] (\xnew,\ynew) circle (0.3);
    \else\ifdim\ynew pt>10pt \filldraw[red] (\xnew,\ynew) circle (0.3);\fi\fi
  }

  \foreach \x in {0,...,28} {
    \pgfmathsetmacro\y{mod(\x*\x,29)}
    \pgfmathsetmacro\xnew{28 - \y}
    \pgfmathsetmacro\ynew{\x + 2}
    \ifdim\xnew pt>10pt \filldraw[green!60!black] (\xnew,\ynew) circle (0.3);
    \else\ifdim\ynew pt>10pt \filldraw[green!60!black] (\xnew,\ynew) circle (0.3);\fi\fi
  }

  \foreach \x in {0,...,28} {
    \pgfmathsetmacro\y{mod(\x*\x,29)}
    \pgfmathsetmacro\xnew{28 - \y}
    \pgfmathsetmacro\ynew{\x + 3}
    \ifdim\xnew pt>10pt \filldraw[orange!90!black] (\xnew,\ynew) circle (0.3);
    \else\ifdim\ynew pt>10pt \filldraw[orange!90!black] (\xnew,\ynew) circle (0.3);\fi\fi
  }
\end{tikzpicture}
\caption{Shifting up and deleting the points lying in the left-bottom square.}
\label{fig:shift}
\end{figure}

Let $h_n = |L \cap [1,n]^2|$ (for all $n\in \nn^+$) denote the number of points of the no-3-in-line construction $L$ lying in $[1,n]^2$.

Choosing $x_n = \left\lfloor \frac{k(n)}{2} \right\rfloor - \left\lfloor \frac{k(n-1)}{2} \right\rfloor$ (and $k(0)=0$) guaranties $\sum_{i=1}^{n} 2x_i \leq k(n)$. As $L_{i,j}\cap [0, i-1]^2 = \emptyset$, this ensures that the copies of $L$ contain at most $\sum _{i=1}^{n} x_i\le k(n)$ points in $[1,n]^2$.

\medskip
\noindent Now we will estimate the number of points of $S$ within $[1, n +k(n)/2]^2$. By the choice of the sequence $(x_n)_{n\in\nn}$, all points of 
$$S_n:=\bigcup_{\substack{1 \le i\le n  \\ 0 \le j < x_i} }L_{i,j}$$

\noindent lie within $[1, n+k(n)/2]^2$. Therefore, $|S_n|$ is a lower bound for the number of points in $[1, n+k(n)/2]^2$.

$L$ contains at most two points on each vertical line, therefore, there is no point in $S$ that is contained by more than two distinct $L_{i,j}$ copies of $L$.  
Consequently,
$$\left | \bigcup_{\substack{1\le i \le n \\0\le j < x_i}}L_{i,j}\cap [1,n+k(n)/2]^2\right | \ge \frac{1}{2} \sum _{i = 1}^n \sum _{j = 0}^{x_i-1} |L_{i,j}\cap [1,n+k(n)/2]^2|.$$

We acquired $L_{i,j}$ by removing the points in $[1, i-1]^2 from  L+v_{i,j}$. The point set $(L+v_{i,j})\cap [1, i-1]^2$ is a subset of $L\cap [1,i-1]^2$, thus it contains at most $h_{i-1}$ points. Furthermore, $[1, n+k(n)/2]^2$ contains all points of $L_{i,j}$. Therefore
$$\left|L_{i,j}\cap [1,n+k(n)/2]^2\right|=\left|(L+v_{i,j})\cap [1,n+k(n)/2]^2 \setminus (L+v_{i,j}) \cap[1, i-1]^2\right| \ge h_n-h_{i-1}.$$

\noindent The number of points of $S$ in $[n+k(n)/2]^2$ is at least
$$
\frac{1}{2} \sum _{i = 1}^n \sum _{j = 0}^{x_i-1} \left|L_{i,j}\cap [1,n+k(n)/2]^2\right|\ge \frac{1}{2}\sum_{i=1}^{n} x_i (h_n - h_{i-1}) = 
\frac{1}{2}\sum_{i=1}^{n} \left(\left\lfloor \frac{k(i)}{2} \right\rfloor - \left\lfloor \frac{k(i-1)}{2} \right\rfloor\right) (h_n - h_{i-1})
$$
Expanding the telescopic sum, we get:
$$ \frac{1}{2}\sum_{i=1}^{n} x_i (h_n - h_{i-1}) = \frac{1}{2} h_n \left(\left\lfloor \frac{k(n)}{2} \right\rfloor - \left\lfloor \frac{k(0)}{2} \right\rfloor \right) - \frac{1}{2}\sum_{i=1}^{n} h_{i-1} \left( \left\lfloor \frac{k(i)}{2} \right\rfloor - \left\lfloor \frac{k(i-1)}{2} \right\rfloor \right) 
$$
Grouping by $\lfloor k(\cdot) \rfloor$ and noting $k(0)=0$: 
$$
\left|S\cap [1, n+k(n)/2]^2\right| \ge \frac{1}{2}\sum_{i=1}^{n} x_i (h_n - h_{i-1}) = \frac{1}{2}\sum_{i=1}^{n} \left\lfloor \frac{k(i)}{2} \right\rfloor (h_i - h_{i-1})
$$

For each $m$ with $n+k(n)/2\le m < (n+1)+k(n+1)/2$, we have
    
$$  \displaystyle\liminf_{m\to\infty} \frac{\left|S \cap [1,m]^2\right|}{m k(m)}\ge \displaystyle\liminf_{n\to\infty} \frac{\left|S \cap [1,n +k(n)/2]^2\right|}{\left((n+1) + k(n+1)/2\right)k\left((n+1) + k(n+1)/2\right)}=$$

$$
=\displaystyle\liminf_{n\to\infty}\frac{\displaystyle\frac{1}{2}\sum_{i=1}^{n} \left\lfloor \frac{k(i)}{2} \right\rfloor (h_i - h_{i-1})}{\left((n+1) + k(n+1)/2\right)k\left((n+1) + k(n+1)/2\right)}\
$$

Note that this construction led to a $\liminf$ that depends on the $h_i$ sequence. The next goal is to find the worst-case $h: \Z^+ \to \Z^+, i \mapsto h_i$ monotonous function so that we can provide a lower bound on the density.

Since the $n$th numerator of the $\liminf$ uses the first $n+1$ values of $h$, we will investigate these $h |_{[0,n]}$ restrictions in depth. First of all, $h(x) \le 2 x$ for every  $x \in [0, n]$. Second, $\liminf_{n \to \infty} \frac{h(n)}{2n} = H$ corresponds to the fact that $\forall \varepsilon > 0$ there exists a large enough $n_0$ such that $\forall n \ge x > n_0: (1 - \varepsilon) 2Hx \le h|_{[0,n]} (x)$.

Our goal is to provide a lower bound to the $\liminf$ by introducing a family of $h^{(n)}: \{1,...n\} \to \Z^+$ functions, which generalize the idea of restrictions and minimize the inner sequence of $\liminf$ elements, element by element.

\begin{align*}
 \displaystyle\liminf_{m\to\infty} \frac{\left|S \cap [1,m]^2\right|}{m k(m)} &\geq \inf_{h: \Z^+ \to \Z^+ } \left( \displaystyle\liminf_{n\to\infty} \frac{\displaystyle\frac{1}{2}\sum_{i=1}^{n} \left\lfloor \frac{k(i)}{2} \right\rfloor (h_i - h_{i-1}) }{{(n+1+k(n+1)/2)\cdot k(n+1+k(n+1)/2)}}\right) \geq
\\&\geq  \displaystyle\liminf_{n\to\infty} \left( \frac{\displaystyle\inf_{h^{(n)}: \{ 1 \dots n \} \to \Z^+ } \displaystyle\frac{1}{2}\sum_{i=1}^{n} \left\lfloor \frac{k(i)}{2} \right\rfloor (h^{(n)} (i) - h^{(n)} (i-1))}{{(n+1+k(n+1)/2)\cdot k(n+1+k(n+1)/2)}} \right)
\end{align*}
So, one should find such $h^{(n)}: \{1,...n\} \to \Z^+; (1 - \varepsilon) 2 H x \leq h^{(n)} (x) \leq 2x$ functions that 
\begin{equation} \label{eq:wsum}
    \frac{1}{2}\sum_{i=1}^{n} \left\lfloor \frac{k(i)}{2} \right\rfloor \left(h^{(n)}(i) - h^{(n)}(i-1) \right)
\end{equation}
is minimal. By the monotonicity of $k$, $\left\lfloor \frac{k(i)}{2} \right\rfloor$ is also monotonous. To minimize the value of (\ref{eq:wsum}), choose a function $h^{(n)}$ with $\left(h^{(n)}(i) - h^{(n)}(i-1) \right) = 0$ for large $i$ (this is a slightly modified version of the Rearrangement Inequality).  This corresponds to $h^{(n)}(x) = 2x$ if $x < (1 - \varepsilon) Hn$; otherwise, $h^{(n)}(x) = (1 - \varepsilon) 2Hn$.

\noindent The value of the sum is:
$$
\frac{1}{2}\sum_{i=1}^{(1 - \varepsilon)Hn} 2 \left\lfloor \frac{k(i)}{2} \right\rfloor \geq \sum_{i=1}^{(1 - \varepsilon)Hn} \left(\frac{k(i)}{2} - \frac{1}{2}\right)= -\frac{(1 - \varepsilon)Hn}{2} + \frac{1}{2}\sum_{i=1}^{(1 - \varepsilon)Hn} k(i)
$$

\noindent By definition \ref{sublin3} of the sublinear limiter, $-(1 - \varepsilon)Hn/2$ is much smaller than the denominator $(n+1+k(n+1)/2)k(n+1+k(n+1)/2)$ in the limit. Therefore,

\begin{align*}
\displaystyle\liminf_{m\to\infty} \frac{\left|S \cap [1,m]^2\right|}{m k(m)} &\ge
\displaystyle\liminf_{n\to\infty}  \frac{\frac{1}{2}\sum_{i=1}^{(1 - \varepsilon)Hn} k(i)}{(n+1+k(n+1)/2)k(n+1+k(n+1)/2)} \overset{\ref{sublin1}}{\ge}
\\&\geq \displaystyle\liminf_{n\to\infty} \frac{(1 - \varepsilon)Hn k(\lfloor(1 - \varepsilon)Hn\rfloor)}{4(n+1+k(n+1)/2)k(n+1+k(n+1)/2)} =
\\&= \displaystyle\liminf_{n\to\infty} \frac{(1 - \varepsilon)H}{4} \frac{k(\lfloor(1 - \varepsilon)Hn\rfloor)}{k(n)}\ge  \frac{\left((1 - \varepsilon)H\right)^2}{4}
\end{align*}

We used Property \ref{sublin1} in the definition of the sublinear limiter to provide a lower bound for the numerator in the second inequality. The last equality holds in the limit by Property \ref{sublin5}. Finally, $\varepsilon \to 0$ when $n\to \infty$ implies
$$\liminf_{m\to\infty} \frac{\left|S \cap [1,m]^2\right|}{m k(m)}\ge \frac{H^2}{4}.$$

\end{proof}

\begin{corollary}

$H^2/4$ is a universal lower bound for $\liminf \frac{|S_n|} {nk(n)}$ if $k(n)=n^{\alpha}$. If $k(n) = \log (n)$, then
$$\liminf_{n\to\infty} \frac{H}{4} \frac{k(\lfloor Hn\rfloor)}{k(n)}\ge\frac{H}{4}$$
is a universal lower bound.

\end{corollary}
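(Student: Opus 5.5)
The plan is to derive the corollary from Theorem~\ref{thm:ulb}. First I check that each of the two functions is a sublinear limiter in the sense of Definition~\ref{def:sublinear}. Then I evaluate the final quantity appearing at the end of that proof, namely
\[
\liminf_{n\to\infty}\frac{(1-\varepsilon)H}{4}\,\frac{k(\lfloor(1-\varepsilon)Hn\rfloor)}{k(n)},
\]
separately for $k(n)=n^\alpha$ and for $k(n)=\log n$. The conclusion of Theorem~\ref{thm:ulb} only uses Property~\ref{sublin5} at this last step. So for $\log n$ I will redo that final line with the actual limit of the ratio, instead of the weaker bound $\lambda$.

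For $k(n)=n^\alpha$ with $0<\alpha<1$, the verification is routine. Property~\ref{sublin2} holds since $n^\alpha=o(n)$, and properties \ref{sublin3} and \ref{sublin4} are immediate. For Property~\ref{sublin1}, I compare the sum with an integral:
\[
\sum_{i\le n}2i^\alpha\sim\frac{2}{\alpha+1}n^{\alpha+1},
\]
so the ratio tends to $2/(\alpha+1)\ge 1$. For Property~\ref{sublin5}, $\lfloor\lambda n\rfloor^\alpha/n^\alpha\to\lambda^\alpha\ge\lambda$ for $\lambda\in[0,1]$. Theorem~\ref{thm:ulb} then gives $H^2/4$ directly.

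For $k(n)=\log n$, I again check Property~\ref{sublin1}. Since
\[
\sum_{i\le n}\log i=n\log n-n+O(\log n),
\]
the ratio tends to $2\ge 1$. Property~\ref{sublin5} holds in the stronger form
\[
\frac{\log\lfloor\lambda n\rfloor}{\log n}\to 1\quad\text{for every fixed }\lambda>0.
\]
Plugging this into the last display of the proof of Theorem~\ref{thm:ulb} replaces the factor $(1-\varepsilon)H$ coming from \ref{sublin5} by $1$. This yields the bound $(1-\varepsilon)H/4$, and letting $\varepsilon\to0$ gives the claimed $H/4$.

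One technicality: $k$ should take values with $k(n)\ge 1$ for the shifted-copy construction. I handle this by using $\lceil\log n\rceil$, or by starting from $n\ge 3$. This does not affect any of the limits above.

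The main obstacle is making sure the earlier step using Property~\ref{sublin1}, which bounds the numerator by $(1-\varepsilon)Hn\,k(\lfloor(1-\varepsilon)Hn\rfloor)/4$, remains valid when the argument is $\lfloor(1-\varepsilon)Hn\rfloor$ rather than $n$. For $\log n$ this is harmless, since the sum asymptotics above hold uniformly at any linear scale. I would state this explicitly, noting that $\liminf$ over the subsequence $m=\lfloor(1-\varepsilon)Hn\rfloor$ is at least the full $\liminf$, which is at least $1$.
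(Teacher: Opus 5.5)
Your proposal is correct and follows the route the paper intends. The paper states the corollary without a separate proof, as a direct application of Theorem~\ref{thm:ulb}; for $\log n$, the last display of that proof is re-run with $k(\lfloor(1-\varepsilon)Hn\rfloor)/k(n)\to 1$ in place of Property~\ref{sublin5}, which is exactly what you do. One small correction to your technicality remark: no rounding is needed, since $x_n=\lfloor k(n)/2\rfloor-\lfloor k(n-1)/2\rfloor$ already handles real-valued $k$ with $k(n)<1$ by placing no copies. Replacing $\log n$ by $\lceil \log n\rceil$ would in fact loosen the collinearity constraint; if you want integer values, use $\lfloor \log n\rfloor$ instead.
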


{
    \footnotesize
    
    \bibliographystyle{abbrv}
	\bibliography{main}
    
}\end{document}